\renewcommand{\textit}[1]{\textsl{#1}}
\theoremstyle{plain}
\newtheorem{theorem}{Theorem}[section]
\newtheorem{lemma}[theorem]{Lemma}
\theoremstyle{definition}
\newtheorem{definition}[theorem]{Definition}
\newtheorem{remark}[theorem]{Remark}
\newtheorem{example}[theorem]{Example}
\setlist[itemize]{noitemsep, topsep=0em}
\newcommand{\N}{\mathbb{N}}
\newcommand{\Z}{\mathbb{Z}}
\newcommand{\R}{\mathbb{R}}
\newcommand{\C}{\mathbb{C}}
\newcommand{\TT}{\mathbb{T}}
\newcommand{\sx}{y}
\newcommand{\chf}[1]{{\chi}_{#1}}
\let\save@mathaccent\mathaccent
\newcommand*\if@single[3]{%
  \setbox0\hbox{${\mathaccent"0362{#1}}^H$}%
  \setbox2\hbox{${\mathaccent"0362{\kern0pt#1}}^H$}%
  \ifdim\ht0=\ht2 #3\else #2\fi
  }
\newcommand*\rel@kern[1]{\kern#1\dimexpr\macc@kerna}
\newcommand*\widebar[1]{\@ifnextchar^{{\wide@bar{#1}{0}}}{\wide@bar{#1}{1}}}
\newcommand*\wide@bar[2]{\if@single{#1}{\wide@bar@{#1}{#2}{1}}{\wide@bar@{#1}{#2}{2}}}
\newcommand*\wide@bar@[3]{%
  \begingroup
  \def\mathaccent##1##2{%
    \let\mathaccent\save@mathaccent
    \if#32 \let\macc@nucleus\first@char \fi
    \setbox\z@\hbox{$\macc@style{\macc@nucleus}_{}$}%
    \setbox\tw@\hbox{$\macc@style{\macc@nucleus}{}_{}$}%
    \dimen@\wd\tw@
    \advance\dimen@-\wd\z@
    \divide\dimen@ 3
    \@tempdima\wd\tw@
    \advance\@tempdima-\scriptspace
    \divide\@tempdima 10
    \advance\dimen@-\@tempdima
    \ifdim\dimen@>\z@ \dimen@0pt\fi
    \rel@kern{0.6}\kern-\dimen@
    \if#31
      \overline{\rel@kern{-0.6}\kern\dimen@\macc@nucleus\rel@kern{0.4}\kern\dimen@}%
      \advance\dimen@0.4\dimexpr\macc@kerna
      \let\final@kern#2%
      \ifdim\dimen@<\z@ \let\final@kern1\fi
      \if\final@kern1 \kern-\dimen@\fi
    \else
      \overline{\rel@kern{-0.6}\kern\dimen@#1}%
    \fi
  }%
  \macc@depth\@ne
  \let\math@bgroup\@empty \let\math@egroup\macc@set@skewchar
  \mathsurround\z@ \frozen@everymath{\mathgroup\macc@group\relax}%
  \macc@set@skewchar\relax
  \let\mathaccentV\macc@nested@a
  \if#31
    \macc@nested@a\relax111{#1}%
  \else
    \def\gobble@till@marker##1\endmarker{}%
    \futurelet\first@char\gobble@till@marker#1\endmarker
    \ifcat\noexpand\first@char A\else
      \def\first@char{}%
    \fi
    \macc@nested@a\relax111{\first@char}%
  \fi
  \endgroup
}
\newcommand\widecheck[1]{%
\savestack{\tmpbox}{\stretchto{%
  \scaleto{%
    \scalerel*[\widthof{\ensuremath{#1}}]{\kern-.6pt\bigwedge\kern-.6pt}%
    {\rule[-\textheight/2]{1ex}{\textheight}}
  }{\textheight}%
}{0.5ex}}%
\stackon[1pt]{#1}{\scalebox{-1}{\tmpbox}}%
}
\begin{document}

\title[Diffraction of return time measures]{Diffraction of return time measures}
\author[M.\ Kesseb\"ohmer]{M.\ Kesseb\"ohmer}
\address[M.\ Kesseb\"ohmer, A.\ Mosbach \& M.\ Steffens]{FB 3 -- Mathematik, Universit\"at Bremen, Bibliothekstr. 1, 28359 Bremen, Germany}
\author[A.\ Mosbach]{A.\ Mosbach}
\author[T.\,Samuel]{T.\,Samuel}
\address[T.\,Samuel]{\parbox[t]{38em}{Mathematics Department, California Polytechnic State University, San Luis Obispo, CA, USA and Institut Mittag-Leffler, Djursholm, Sweden}}
\author[M.\ Steffens]{M.\ Steffens}
\date{\today}
\keywords{Aperiodic order, autocorrelation, diffraction, transformations of the unit interval, rigid rotations}
\subjclass[2010]{43A25, 52C23, 37E05, 37A25, 37A45}


\maketitle

\begin{abstract}
Letting $T$ denote an ergodic transformation of the unit interval and letting $f\colon[0,1)\to \R$ denote an observable, we construct the $f$-weighted return time measure $\mu_\sx$ for a reference point $\sx\in[0,1)$ as the weighted Dirac comb  with support in $\Z$ and weights $f\circ T^z(\sx)$ at $z\in\Z$, and if $T$ is non-invertible, then we set the weights equal to zero for all $z < 0$. Given such a Dirac comb, we are interested in its diffraction spectrum which emerges from the Fourier transform of its autocorrelation and analyse it for the dependence on the underlying transformation. For certain rapidly mixing transformations and observables of bounded variation, we show that the diffraction of $\mu_{\sx}$ consists of a trivial atom and an absolutely continuous part, almost surely with respect to $\sx$.  This contrasts what occurs in the setting of regular model sets arising from cut and project schemes and deterministic incommensurate structures.  As a prominent example of non-mixing transformations, we consider the family of rigid rotations $T_{\alpha} \colon x \to x + \alpha \bmod{1}$ with rotation number $\alpha \in \R^+$. In contrast to when $T$ is mixing, we observe that the diffraction of $\mu_{\sx}$ is pure point, almost surely with respect to $\sx$.  Moreover, if $\alpha$ is irrational and the observable $f$ is Riemann integrable, then the diffraction of $\mu_{\sx}$ is independent of  $\sx$.  Finally, for a converging sequence $(\alpha_{i})_{i \in \N}$ of rotation numbers, we provide new results concerning the limiting behaviour of the associated diffractions. 
 
\end{abstract}

\section{Introduction and statement of main results}

Alloys with aperiodic long-range order were first discovered through diffraction experiments in the 1980's by Shechtman \textsl{el al.} \cite{PhysRevLett.53.1951} and subsequently by Ishimasa \textsl{et al.} \cite{PhysRevLett.55.511}.  Since then the theory of aperiodic order, also known as the mathematical theory of quasicrystals, has stimulated a tremendous amount of research, see for instance \cite{BaakeGrimm2013,MR3380566,MR1460016,MR1340198} and references therein. Indeed, the diffraction properties of quasicrystals, both physical and mathematical, are among their most striking features.

A mathematical idealisation of the set of atomic positions of a physical quasicrystal or incommensurate crystals is often given in terms of a measure $\mu$ supported on a locally compact Abelian group, for example lattices, for instance $\Z$.  Indeed, Hof \cite{MR1328260} established that the natural formulation of mathematical diffraction theory is via measures.  The diffraction of $\mu$ is given by the Fourier transform $\widehat{\gamma}$ of the autocorrelation $\gamma = \mu \circledast \widetilde{\mu}$, see \Cref{section:continuous_fourier_transform} for a precise definition.  Loosely speaking, the autocorrelation encodes the frequencies of distances between the atoms of $\mu$.  One of the most common ways to obtain $\widehat{\gamma}$ is via Bochner's Theorem, see for instance \cite{StrungaruRichard2017}.  An alternative approach, given in \cite{ArgabrightLamadrid1974,ArgabrightLamadrid1990}, is to compute the Fourier Bohr coefficients. 

The diffraction spectrum of Dirac combs supported on point sets such as regular model sets arising from cut and project schemes and deterministic incommensurate structures have been extensively studied and shown to be pure point, see for instance \cite{Baake-Grimm-2011,BaakeGrimm2013,MR2008926,StrungaruRichard2017} and references therein.  Diffraction of translation bounded weighted Dirac combs supported on locally compact Abelian group have also been studied in \cite{BaakeGrimm2013,MR2084582}.  Here we investigate spectral properties of a new class of weighted Dirac combs which we call weighted return time measures.  Letting $T$ denote a transformation of the unit interval, $\eta$ denote a $T$-invariant ergodic measure and $f \in L^{1}([0, 1], \eta)$ denote a non-negative observable, we define the $f$-weighted return time measure with respect to $T$ and with reference point $\sx \in [0, 1]$ by
\begin{align*}
\mu_{\sx} \coloneqq 
\begin{cases}
\displaystyle \ \sum_{n \in \N_0} f \circ T^{n} ( \sx )\ \delta_{n} & \text{if} \ T \ \text{is non-invertible,}\\[1.5em]
\displaystyle \ \sum_{z \in \Z} f \circ T^{z} ( \sx )\ \delta_{z} &  \text{if} \ T \ \text{is invertible.}
\end{cases}
\end{align*}
Here, for $z \in \mathbb{Z}$, we let $\delta_{z}$ denote the Dirac point mass at $z$.  In this note we answer the following questions.

First, in which way do mixing properties of $T$ have an impact upon spectral decomposition of the diffraction $\widehat{\gamma_{\mu_{\sx}}}$ of $\mu_{\sx}$. The conclusion we reach in \Cref{thm:difraction_mixing} is that in the case when $T$ is mixing and our observable $f$ is of bounded variation, the diffraction of $\mu_\sx$ consists of a trivial atom and an absolutely continuous part, for $\eta$-almost every $\sx$. This contrasts what occurs in the standard setting of regular model sets arising from cut and project schemes and deterministic incommensurate structures. To see how our setting fits into these latter settings see \Cref{Rem:RiemannCase}.

On the other hand, if $T_{\alpha} \colon x \mapsto x + \alpha \bmod 1$ is a rigid rotation with rotation number $\alpha \in \R^+$ -- the classical example of a non-mixing transformation of the unit interval -- the diffraction $\widehat{\gamma_{\mu_{\sx}}}$ of $ \mu_{\sx} =\mu_{\sx, \alpha}$ is pure point for $\eta$-almost every $\sx$, see \Cref{thm:diffraction_rotation}. Moreover, if $\alpha$ is irrational and the observable $f$ is Riemann integrable, then the diffraction of $\mu_{\sx}$ is independent of  $\sx$.  We remark that the Dirac combs we consider here, for rigid rotations, fit into the setting of \cite{BaakeGrimm2013,MR3723344}, and thus with some work, one may conclude this result from \cite{BaakeGrimm2013,MR3723344}. However, for completeness, we include a shorter proof tailored to our setting.

The second question stems from the work of \cite{MR2227822}. 
Let $\alpha\in\R^+$ and $(\alpha_{i})_{i \in \N}$ denote a positive convergent sequence of rotation numbers different from $\alpha$ with limiting value $\alpha$, let $(\sx_{i})_{i\in \N}$ denote a sequence of reference points, and let $(f_{i})_{i \in \N}$ denote a convergent sequence of Riemann integrable observables.  For $i \in \N$, let $\mu_{\alpha_i,\sx_i} = \mu_{\sx_{i}}$ denote the $f_{i}$-weighted return time measure with respect to $T_{\alpha_{i}}$ and with reference point $\sx_{i} \in [0, 1]$.  Here, we have that $\mu_{\alpha_{i}, \sx_{i}}$ converges in the vague topology, but does the sequence of diffractions $\widehat{\gamma_{\mu_{\sx_{i}}}}$ also converge in the vague topology? If so, what is the limiting measure?  The conclusion we reach in \Cref{thm:convergence} is the following.  The sequence $(\mu_{\alpha_i,\sx_i})_{i \in \N}$ of weighted Dirac combs induces a sequence of autocorrelations which has a vague limit $\gamma$.  However, we only have $\gamma = \gamma_{\mu_{\alpha,\sx}}$, for some $y \in [0, 1]$, if $\alpha$ is irrational. While this may seem counterintuitive at first sight, it reflects the fact that orbits of irrational rotations may be approximated by rational ones but not the other way around.  Note that the convergence of the associated diffractions follows from the continuity of the Fourier transform \cite{StrungaruRichard2017,Rudin1990_Fourier}.  See \Cref{fig2} for an illustration of the diffractions of two $f$-weighted return time measures associated to rigid rotations with irrational rotation numbers close together.

We remark that in \cite{BaakeLenz2004}, Baake and Lenz, using the work of Gouéré \cite{Gouere2002}, constructed a natural autocorrelation on the space of translation bounded measures under group actions. In \cite{LenzStrungaru2016} weakly almost periodic measures are considered, and shown to have a unique decomposition into a pure point diffractive part and continuous diffractive part. While the explicit decomposition of such measure is in general hard to obtain, by using Perron-Frobenius theory for mixing systems, we give such a decomposition in \eqref{eq:autocorralation_mixing_representartion}, also see \Cref{thm:difraction_mixing}. Also, by the results of \cite{LenzStrungaru2016}, we have that weighted return time measures for mixing transformations are, in general, not weakly almost periodic. Other works where the autocorrelation of group actions are investigated include \cite{Lenz2016,MuellerRichard2013,MR3723344}, and works discussing the effect of mixing conditions on the autocorrelation of tilings include \cite{BerendRadin1993,Mozes1989,Solomyak1996}.

\subsection*{Outline}

In \Cref{section:diffraction} we recall basic definitions and facts necessary to define the Fourier transform of a non-negative measure $\mu$ supported on a locally compact Abelian group $G$ and in the case when $G = \mathbb{Z}$ we define the autocorrelation and diffraction of $\mu$.  In \Cref{mixingSection} we provide an answer to our first question: in which way do mixing properties of $T$ have an impact upon the pure pointedness of the diffraction $\widehat{\gamma_{\mu_{\sx}}}$ of $\mu_{\sx}$.  Here our main results are \Cref{theorem:autocorrelation,thm:difraction_mixing}.  In \Cref{sec:ridig_rotations} we turn our attention to our second question which concerns the limiting behaviour of a sequence of diffractions.  Here, our main results are \Cref{thm:diffraction_rotation,thm:convergence}.  \Cref{mixingSection,sec:ridig_rotations} both conclude with a series of examples demonstrating the general theory developed in this note.

\section{General setup} \label{section:diffraction}

\subsection{Definitions and facts}

Although, in the sequel, we will predominately work in the case that $G = \Z$, below we state the necessary definitions and facts concerning the Fourier analysis of a measures supported on metrisable $\sigma$-compact locally compact Abelian groups $G$.  Given such a group $G$, we let $\omega_G$ denote the associated Haar measure. The space of complex-valued continuous function on $G$ is denoted by $\mathcal{C}(G)$ and is equipped with the topology of uniform convergence given by the supremum norm denoted by $\lVert \cdot \rVert_{\infty}$. For $\mathcal{D}(G)\subseteq\mathcal{C}(G)$ we denote by $\mathcal{D}_b(G)$ the subset of $\mathcal{D}(G)$ of bounded functions, by $\mathcal{D}_c(G)$ the subset of $\mathcal{D}(G)$ of compactly supported functions and by $\mathcal{D}^+(G)$ the subset of $\mathcal{D}(G)$ of real-valued non-negative functions.  For  $f\colon G\to\C$ and $x\in G$ set $\widetilde{f}(x)\coloneqq \widebar{f(-x)}$. We call $f$ \textsl{positive definite} if and only if, for all $N \in \N$, $x_{1}, \dots, x_{N} \in G$ and $c_{1}, \ldots, c_{N} \in \C$,
\begin{align*}
\sum_{1 \leq n, m \leq N} c_n \widebar{c_m}\ f(x_n-x_m) \geq 0.
\end{align*}
Examples of positive definite functions include characters and characteristic functions.

We denote the set of non-negative Radon measures with support contained in $G$ by $\mathscr{M}(G)$ and equip it with the vague topology. For $\mathscr{N}(G)\subseteq\mathscr{M}(G)$ denote by $\mathscr{N}_b(G)$ the subset of $\mathscr{N}(G)$ of bounded measures and by $\mathscr{N}_c(G)$ the subset of $\mathscr{N}(G)$ of measures with compact support. For $\mu \in \mathscr{M}(G)$, let $L^1(G,\mu)$ represent the Banach $*$-algebra of $\mu$-integrable functions with bounded $\mu$-$L^1$-norm denoted by $\lVert \cdot \rVert_{1}$ and, for $f,g\in L^1(G, \mu)$, define the \textsl{convolution} of $f$ with $g$ by
\begin{align*}
f*g(y) &\coloneqq \int f(x)g(y-x)\ \mathrm{d}\mu(x).
\intertext{Note that $L^1(G, \mu)$. When $\mu = \omega_G$ we write $L^1(G)$ for $L^1(G,\omega_G)$.  For two measures $\mu, \nu\in\mathscr{M}(G)$, the \textsl{convolution} of $\mu$ with $\nu$ is defined by}
\mu*\nu(A) &\coloneqq \int \chf{A}(x+y)\ \mathrm{d}\mu(x)\ \mathrm{d}\nu(y)
\end{align*}
for all Borel sets $A\subseteq G$.  Here, for a subset $A$ of $G$, we denote by $\chf{A}$ the characteristic function on $A$. Namely, $\chf{A}(x) = 0$ if $x \not\in A$ and $\chf{A}(x) = 1$ if $x \in A$.  The convolution of two measures is again a measure, but may not necessarily be Radon. However, if $\mu,\nu \in \mathscr{M}_b(G)$, then $\mu*\nu \in \mathscr{M}_b(G)$.  

In order to define the autocorrelation, and hence the diffraction, of a measure, we make use of the following.  For $\mu \in \mathscr{M}(G)$ and $g\in L^1(G,\mu)$, set
\begin{align*}
\langle \mu, g \rangle \coloneqq \int g \ \mathrm{d} \mu
\end{align*}
and let $\widetilde{\mu}$ denote the unique measure satisfying $\langle \widetilde{\mu}, \widebar{g}\rangle = \langle \mu, \widetilde{g}\rangle$. A measure $\mu$ is called \textsl{positive definite} if and only if $\langle \mu, f*\widetilde{f} \rangle \geq 0$ for all $f\in\mathcal{C}_c(G)$. The set of positive definite measures form a closed convex cone, see \cite{BergForst1975}, and is denoted by $\mathscr{M}_p(G)$.  Notice, if $\mu\in\mathscr{M}(G)$ is such that $\mu*\widetilde{\mu}\in\mathscr{M}(G)$, then $\mu*\widetilde{\mu}$ is positive definite, see for instance \cite{BergForst1975,Rudin1990_Fourier}.

The dual group of $G$ is denoted by $\Gamma$ and is a locally compact Abelian group. We call the elements of $\Gamma$ \textsl{characters} and set $(x,\gamma) \coloneqq \gamma(x)$, for $x \in G$ and $\gamma \in \Gamma$. Note, by Pontryagin's duality theorem, the dual group of $\Gamma$ is isomorphic to $G$. The Fourier transform is a linear norm-decreasing mapping from $(L^1(G), \lVert \cdot \rVert_{1})$ to $(\mathcal{C}_0(\Gamma), \lVert \cdot \rVert_{\infty})$ given by
\begin{align*}
\widehat{f}(\gamma) \coloneqq \int f(x) \widebar{(x,\gamma)}\ \mathrm{d}\omega_G(x),
\end{align*}
for $f \in L^1(G)$.  Here $\mathcal{C}_0(\Gamma)$ denotes the closure of $\mathcal{C}_c(\Gamma)$ in $\mathcal{C}(\Gamma)$.  The Fourier transform $\widehat{\mu}$ of a measure $\mu \in \mathscr{M}_p(G)$ is the unique measure satisfying $\langle \mu, \widehat{f} \rangle = \langle \widehat{\mu}, f \rangle$ for all $f\in \mathcal{P}(\Gamma)\coloneqq \{g*\widetilde{g} \colon g\in\mathcal{C}_c(\Gamma)\}$.  We refer the reader to \cite{BergForst1975} for a proof that $\widehat{\mu}$ is well-defined.  The Fourier transformation of measure is a continuous operation, see for instance  \cite{StrungaruRichard2017,Rudin1990_Fourier}.  In \cite{BergForst1975}, it is also shown that if $\mu\in\mathscr{M}_p(G)$, then $\mu$ is translation bounded; that is, for $K\subseteq G$ a compact set $\sup\{\mu(g+K) : g\in G\}$ is finite. Additionally, if $\mu$ is positive definite and bounded, then there exists a density $h\in\mathcal{C}(\Gamma)$ such that $\widehat{\mu} = h\omega_\Gamma$.

\subsection{Autocorrelation and diffraction} \label{section:continuous_fourier_transform}

Unless otherwise stated, from here on, we assume that $G = \Z$ , in this case its dual group $\Gamma$ is isomorphic to the unit circle in $\C$, which is denoted by $\TT$, and its Haar measure $\omega_{\Z}$ is the counting measure.  Namely, $\omega_{\Z} \coloneqq \sum_{z \in \Z} \delta_{z}$, where $\delta_{z}$ denotes the Dirac point mass at $z$; that is $\delta_{z}(A) = 0$ if $z \not\in A$ and $\delta_{z}(A) = 1$ if $z \in A$, for $A$ a Borel set.  For $n \in \N$, we set $B_n \coloneqq \{x\in \Z \colon \lvert x \rvert \leq n\}$ and for a Borel set $K\subseteq \Z$ we set $\mu\vert_{n}(K)\coloneqq \mu(K\cap B_n)$. Let $\mu\in\mathscr{M}(\Z)$, if the sequence
\begin{align*}
(\gamma_{\mu,n})_{n\in\N} \coloneqq \left(\frac{\mu\vert_{n}*\widetilde{\mu\vert_{n}}}{\omega_{\Z}(B_n)} \right)_{n\in\N}
\end{align*}
attains a unique vague limit $\mu\circledast\widetilde{\mu}$ in $\mathscr{M}(\Z)$, then this limit is called the \textsl{autocorrelation} of $\mu$ and is denoted by $\gamma_{\mu}$.  Here the symbol $\circledast$ is referred to as the \textsl{Eberlein convolution}, see \cite{BaakeGrimm2013}. By a result of \cite{Schlottmann1999}, one may show, in the case when $\mu$ is translation bounded, that
\begin{align}\label{eq:autocorrolation_Schlottmann1999}
\mu\circledast\widetilde{\mu} = \lim_{n\to\infty} \frac{\mu*\widetilde{\mu\vert_{n}}}{\omega_{\Z}(B_n)}.
\end{align}
As an aside, let us note that the above construction can be performed for general local compact $\sigma$-compact Abelian groups.  In this more general setting, the sequence $(B_n)_{n\in\N}$ above is replaced by an arbitrary Van-Hove sequence, see \cite{MuellerRichard2013,Schlottmann1999}.

Since $\mathscr{M}_p(G)$ is closed, if it exists, the autocorrelation $\gamma_\mu$ is a positive definite measure, namely $\gamma_\mu \in \mathscr{M}_p(G)$. With this at hand, we may define the \textsl{diffraction} of a measure $\mu$ to be the Fourier transform $\widehat{\gamma_{\mu}}$ of its autocorrelation $\gamma_\mu$ which is also the vague limit of the sequence $(\widehat{\gamma_{\mu,n}})_{n\in\N}$.

\begin{remark}
Let $\mu$ be a measure on $\R$ with support contained in $\Z$. Then the restriction $\mu|_{\Z}$  defines a measure on $\Z$, but can be lifted back to $\R \cong \R / \Z \ltimes \Z$ via 
\begin{align*}
\mu(f)= (\mu|_\Z*\delta_0)(f)\coloneqq \int_{\R / \Z} \int_{\Z} f(x+y) \ \mathrm{d}\mu|_{\Z}(x)\ \mathrm{d}\delta_{0}(y).
\end{align*}
An elementary calculation shows $\gamma_{\mu} = \mu \circledast \widetilde{\mu} = (\mu|_\Z*\delta_0)\circledast\widetilde{(\mu|_\Z*\delta_0)} = (\mu|_\Z\circledast\widetilde{\mu|_\Z})*\delta_0 = \gamma_{\mu|_\Z}*\delta_0$. Since $\delta_0$ is a measure on $\R/\Z$, an application of the Poisson-summation formula yields $\widehat{\gamma_{\mu}} =\widehat{\gamma_{\mu|_\Z}*\delta_0} = \widehat{\gamma_{\mu|_\Z}}*\widehat{\delta_0} = \widehat{\gamma_{\mu|_\Z}}*\omega_\Z$. This shows that no extra information is gained by considering the Dirac comb on $\R$ instead of $\Z$. 
\end{remark}

\section{Autocorrelation of mixing transformations of the unit interval} \label{mixingSection}

Here we investigate the autocorrelation and diffraction of Dirac combs emerging from ergodic transformations of the unit interval. To this end, let $T\colon[0,1]\to[0,1]$ and let $\eta$ denote a $T$-invariant ergodic Borel measure. Recall that $\eta$ is $T$-invariant if for all Borel sets $A\subseteq[0,1]$ we have $\eta(T^{-1}(A)) = \eta(A)$, and that $\eta$ is ergodic if $T^{-1}(A) = A$, then $\eta(A) = 0$ or $\eta([0,1] \setminus A) = 0$.

\begin{definition}\label{Def:mu}
Assume the above setting and let $f\in L^1([0,1],\eta)$ be a real-valued non-negative bounded function. We define the \textsl{$f$-weighted return time measure} $\mu_{\sx}$ with respect to $T$ and with \textsl{reference point} $\sx \in [0,1]$ by
\begin{align*}
\mu_{\sx} \coloneqq 
\begin{cases}
\displaystyle \ \sum_{n \in \N_0} f \circ T^{n} ( \sx )\ \delta_{n} & \text{if} \ T \ \text{is non-invertible,}\\[1.5em]
\displaystyle \ \sum_{z \in \Z} f \circ T^{z} ( \sx )\ \delta_{z} &  \text{if} \ T \ \text{is invertible.}
\end{cases}
\end{align*}
\end{definition}

The following theorem ensures, for almost every $\sx \in [0,1]$, that the \textsl{autocorrelation} $\gamma_{\mu_\sx} = {\mu}_{\sx}\circledast\widetilde{\mu}_{\sx}$ of an $f$-weighted return time measure $\mu_\sx$ exists and give its almost sure value.  Before stating the result we introduce the following notation.  For $z \in \Z$, we set
\begin{align*}
\Xi(T,\eta)(z) \coloneqq \begin{cases}
\displaystyle\frac{1}{2} \int f \circ T^{\lvert z \rvert}\cdot f \ \mathrm{d}\eta & \text{if} \ T \ \text{is non-invertible,}\\[1.5em]
\displaystyle\int f \circ T^{-z}\cdot f \ \mathrm{d}\eta & \text{if} \ T \ \text{is invertible.}
\end{cases}
\end{align*}

\begin{theorem}\label{theorem:autocorrelation}
Assuming the setting of \Cref{Def:mu} the autocorrelation $\gamma_{\mu_\sx}$ exists for $\eta$-almost every $\sx$ 
and equals
 \begin{align*}
\gamma_{\mu_{\sx}} &= \sum_{z \in \Z}\Xi(T,\eta)(z)\ \delta_z.
\end{align*}
\end{theorem}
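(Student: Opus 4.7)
The plan is to reduce the vague limit defining $\gamma_{\mu_\sx}$ to a pointwise computation at each lattice point, and then to invoke Birkhoff's ergodic theorem to evaluate that pointwise limit.

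First, I would observe that for every $n$ the measure $\mu_\sx\vert_n*\widetilde{\mu_\sx\vert_n}$ is a discrete measure supported on $\Z$, as is the candidate limit $\sum_{z\in\Z}\Xi(T,\eta)(z)\,\delta_z$. Since $\mathcal{C}_c(\Z)$ consists of finitely supported functions, vague convergence on $\Z$ is equivalent to pointwise convergence of the masses $(\gamma_{\mu_\sx,n})(\{z\})\to \Xi(T,\eta)(z)$ for every $z\in\Z$. Thus the theorem reduces to identifying the limit at each fixed $z$, on a common $\eta$-full set of reference points.

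Next, I would carry out the direct combinatorial calculation of these masses. Writing $a_k\coloneqq f\circ T^k(\sx)$ (with $a_k=0$ for $k<0$ in the non-invertible case), one has
\begin{align*}
(\mu_\sx\vert_n*\widetilde{\mu_\sx\vert_n})(\{z\}) \;=\; \sum_{\substack{k,m\in B_n\\ k-m=z}} a_k\,a_m,
\end{align*}
which, after the substitution $j=k-z$, collapses to a one-sided partial sum $\sum_{j} f\circ T^j(\sx)\cdot f\circ T^{j+|z|}(\sx)$ over an interval of length $n+O(|z|)$ in the non-invertible case, and to a two-sided partial sum of length $2n+O(|z|)$ in the invertible case. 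Dividing by $\omega_\Z(B_n)=2n+1$, the prefactor tends to $1/2$ in the non-invertible setting and to $1$ in the invertible setting.

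Then, for the ergodic averages themselves, I would apply the Birkhoff ergodic theorem (respectively its two-sided version in the invertible case) to the bounded, hence $\eta$-integrable, observable $f\cdot(f\circ T^{|z|})$. This yields, for $\eta$-almost every $\sx$ in a set $A_z$ of full measure, convergence of the normalised partial sums to $\int f\cdot f\circ T^{|z|}\,\mathrm{d}\eta$; together with the prefactor this gives precisely $\Xi(T,\eta)(z)$ (using $T$-invariance of $\eta$ to identify $\int f\circ T^{-z}\!\cdot f\,\mathrm{d}\eta$ with $\int f\cdot f\circ T^{|z|}\,\mathrm{d}\eta$ in the invertible case). Intersecting the countable family $\{A_z\}_{z\in\Z}$ produces a single set $A\subseteq[0,1]$ with $\eta(A)=1$ on which convergence holds simultaneously at every $z$, establishing vague convergence to the claimed autocorrelation.

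The only mildly delicate point is bookkeeping the boundary: after truncation the index range differs from the natural Birkhoff window by $O(|z|)$ terms, but since $f$ is bounded these contribute at most $O(|z|/n)\to 0$, so they do not affect the limit. Beyond that, the argument is a clean marriage of the explicit convolution formula with the pointwise ergodic theorem, with the quantifier exchange ``almost every $\sx$ versus every $z$'' handled by the countability of $\Z$.
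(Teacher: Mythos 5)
Your proposal is correct and follows essentially the same route as the paper: expand the truncated convolution into a Birkhoff sum for the observable $f\cdot(f\circ T^{|z|})$ and apply the pointwise ergodic theorem, with the full-measure set obtained by intersecting over the countably many $z\in\Z$. The only cosmetic differences are that the paper invokes the identity \eqref{eq:autocorrolation_Schlottmann1999} to work with $\mu_\sx*\widetilde{\mu_\sx\vert_N}$ (so the boundary bookkeeping appears in the index shift rather than as $O(|z|/n)$ error terms, which you handle correctly since $f$ is bounded), and that you test vague convergence against point masses rather than general $\varphi\in\mathcal{C}_c(\Z)$ --- equivalent on the discrete group $\Z$.
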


\begin{proof}
We will prove the statement in the case that $T$ is non-invertible as the case when $T$ is invertible follows analogously.  For every $\varphi\in\mathcal{C}_c(\Z)$, by \eqref{eq:autocorrolation_Schlottmann1999},
\begin{align*}
\begin{aligned}
\langle \gamma_{\mu_\sx}, \varphi\rangle 
&=  \lim_{N \to \infty} \ \frac{1}{2N} \left\langle {\mu_\sx}\ast \widetilde{{\mu_\sx}}\vert_N, \varphi \right\rangle\\
&= \lim_{N\to\infty} \ \frac{1}{2N} \int \int \chf{[-N,N]}(n)\ \varphi(m+n)\ \mathrm{d}{\mu_\sx}(m)\ d\widetilde{{\mu_\sx}}(n) \\
&= \lim_{N\to\infty} \ \frac{1}{2N} \int \overline{\int \chf{[-N,N]}(-n)\  \widebar{\varphi(m-n)}\ \mathrm{d}{\mu_\sx}(m)\ \mathrm{d}{\mu_\sx}(n)} \\
&= \lim_{N\to\infty} \ \frac{1}{2N} \sum_{m\in\N_0} \sum_{0 \leq n \leq N} \varphi(m-n)\ f \circ T^{n}(\sx) \cdot f \circ T^{m}(\sx) \\
&= \lim_{N \to \infty} \ \frac{1}{2N} \ \sum_{z \in \Z} \varphi(z) \sum_{\max\{0,-z\} \leq n \leq N} f \circ T^{n}(\sx) \cdot f \circ T^{z+n}(\sx) \\
&= \lim_{N \to \infty} \frac{1}{2N} \left( \ \sum_{z<0} \ \varphi(z) \sum_{-z\leq n\leq N} (f \circ T^{-z} \cdot f )( T^{z+n}(\sx)) + \sum_{z\geq0} \ \varphi(z) \sum_{0\leq n \leq N} ( f \cdot f \circ T^{z} )( T^{n}(\sx)) \right)\\
&= \sum_{z<0} \ \varphi(z) \lim_{N \to \infty}  \frac{1}{2(N+z)}  \sum_{-z\leq n\leq N} ( f \circ T^{-z} \cdot f )( T^{z+n}(\sx)) + \sum_{z\geq0} \ \varphi(z) \lim_{N \to \infty} \frac{1}{2N}  \sum_{0\leq n \leq N} ( f \cdot f \circ T^{z} )( T^{n}(\sx)).
\end{aligned}
\end{align*}
For  $\eta$-almost every $\sx$, we may apply Birkhoff's ergodic theorem to obtain
\begin{equation*}
\langle\gamma_{\mu_\sx},~\varphi\rangle
= \frac{1}{2}\left( \ \sum_{z<0} \ \varphi(z) \ \Xi(T, \eta)(z) + \sum_{z\geq0} \ \varphi(z) \ \Xi(T, \eta)(z) \right). \qedhere
\end{equation*}
\end{proof}

\begin{remark}\label{uniquelyErgodic}
If $T$ is uniquely ergodic and the observable $f$ is continuous, then the autocorrelation exists for every $\sx\in [0,1]$ and the limit exists uniformly in $\sx$.  Such transformations include rigid rotations with irrational rotation number \cite{Walters1982} and minimal interval exchange maps satisfying Boshernitzan's property P condition \cite{MR808101}.
\end{remark}

\begin{remark}
Using the ergodic theorem of Lindenstrau{\ss}, one can show the existence of the autocorrelation for invariant measures of certain group actions \cite{MuellerRichard2013}.
\end{remark}

In addition to the assumptions of \Cref{Def:mu}, in the remainder of this section, we assume that $T$ is a piecewise monotonic transformation of the unit interval which is mixing with respect to $\eta$. By piecewise monotonic, we mean there exists a finite partition $\mathscr{I}$ of $[0,1]$ such that for all $I\in\mathscr{I}$ the restriction of $T$ on $I$, written as $T\vert_{I}$, is continuous, strictly monotone and differentiable. Recall that a transformation $T$ is mixing with respect to $\eta$, if for all Borel sets $A, B \subseteq [0, 1]$, we have 
\begin{align*}
\lim_{n \to \infty} \eta(T^{-n}(A) \cap B) = \eta(A)\eta(B).
\end{align*}
Note that the property of mixing implies ergodic, see \cite{Walters1982}.  The \textsl{variation} $\operatorname{var}(f)$ of an observable $f \colon [0, 1] \to \R$ is defined by
\begin{align*}
\operatorname{var}(f) \coloneqq \inf_{g = f \; \text{a.s.}} \sup \left\{ \sum_{1 \leq i \leq N} \lvert g(x_i)-g(x_{i-1}) \rvert \colon N \geq 1\; \text{and} \; 0 \leq x_0 < x_1 < \dots < x_N \leq 1 \right\}.
\end{align*}
We say $f \colon [0,1] \to \R$ is of bounded variation if $\operatorname{var}(f) < \infty$.  The space of real-valued integrable functions of bounded variation will be denoted by $\operatorname{BV} \coloneqq \{ f \in L^{1}([0,1],\eta) \colon \operatorname{var}(f) < \infty \}$ and we equip this space with the norm $\lVert f \rVert_{\textup{var}} \coloneqq \max\{ \lVert f \rVert_{1}, \operatorname{var}(f) \}$.  Let $P \colon\!\!\operatorname{BV} \to \operatorname{BV}$ denote the Perron-Frobenius-operator given by
\begin{align*}
P(s)(x) \coloneqq \sum_{y\in T^{-1}(x)} \phi(y)\cdot s(y) = \sum_{I\in\mathscr{I}} \phi\circ T\vert_{I}^{-1}(x)\cdot s\circ T\vert_{I}^{-1}(x)\cdot\chf{_{T(I)}}(x),
\end{align*}
for $s \in \operatorname{BV}$  and where $\phi\colon[0,1]\to(0,\infty)$ denotes the geometric potential function; namely, for all $x\in \Omega_0\coloneqq \bigcup_{I\in\mathscr{I}} \operatorname{I}^\circ$, we set $\phi(x)\coloneqq 1/|T'(x)|$ and, for all $x\in[0,1]\backslash\Omega_0$, we set $\phi(x) \coloneqq \lim_{y\to x} \, \inf \{ \phi(y) \colon y\in\Omega_0 \}$.  Note, the dual of the Perron-Frobenious-operator preserves the Lebesgue measure, denoted by $\Lambda$.
 It is known that the operator $P$ has a simple maximal eigenvalue $\lambda = 1$, see \cite{Keller1984,HofbauerKeller1982}. We denote the associated eigenfunction by $h$ and recall that it is positive. Moreover, $P = \Phi + \Psi$, where $\Phi(f) = \langle \Lambda, f \rangle \, h$ and where $\Psi$ is such that there exists an $M>0$ and $q\in(0,1)$ with $\|\Psi^n\|_{\operatorname{op}}\leq Mq^n$. Additionally, $\Phi\circ\Psi = \Psi\circ\Phi = 0$ and so $P^n = \Phi + \Psi^n$, for all $n\in\N$. 

With the above at hand we can explicitly compute the diffraction of $\gamma_{\mu_\sx}$.  

\begin{theorem}\label{thm:difraction_mixing}
Let $T$ be a piecewise monotonic transformation of the unit interval which is mixing with respect to $\eta$, $f \in \operatorname{BV}$ be non-negative, $\sx \in [0, 1]$ and $\mu_\sx$ denote the $f$-weighted return time measure with respect to $T$ and with reference point $\sx$.  The diffraction of $\mu_\sx$ is given, for $\eta$-almost every $\sx$, by
\begin{align*}
\widehat{\gamma_{\mu_\sx}} = \frac{1}{2} \left( \int f\ \mathrm{d}\eta \right)^2 \delta_1 + g\,\omega_\TT.
\end{align*}
Here $g(x) \coloneqq \sum_{z\in\Z} (c_z/2 )\ (x,z)$, where $c_z \coloneqq \int \Psi^{\lvert z \rvert}(f\cdot h) f\ \mathrm{d}\Lambda $, for $z\neq 0$, and $c_0\coloneqq \int \lvert f \rvert^2 \mathrm{d}\eta - \left( \int f\ \mathrm{d}\eta \right)^2$.
 \end{theorem}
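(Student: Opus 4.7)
The plan is to combine the autocorrelation formula from \Cref{theorem:autocorrelation} with the spectral decomposition $P^{n} = \Phi + \Psi^{n}$ of the Perron-Frobenius operator, and then to Fourier transform term by term.

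First I would rewrite $\Xi(T,\eta)(z)$ in terms of Lebesgue measure. Using that the density of $\eta$ with respect to $\Lambda$ is the eigenfunction $h$, the defining duality of $P$ gives
\begin{align*}
\int f\circ T^{|z|}\cdot f \, \mathrm{d}\eta = \int f\cdot P^{|z|}(fh) \, \mathrm{d}\Lambda.
\end{align*}
Applying $P^{|z|} = \Phi + \Psi^{|z|}$ and using $\Phi(fh) = \langle\Lambda, fh\rangle\, h = \bigl(\textstyle\int f\, \mathrm{d}\eta\bigr) h$, this splits as $\bigl(\textstyle\int f\, \mathrm{d}\eta\bigr)^{2} + c_{z}$ for $z\neq 0$; the $z = 0$ case is verified directly from the definition of $c_{0}$. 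Substituting into \Cref{theorem:autocorrelation} yields the measure-level decomposition
\begin{align*}
\gamma_{\mu_\sx} \;=\; \tfrac{1}{2}\bigl(\textstyle\int f\, \mathrm{d}\eta\bigr)^{2}\, \omega_\Z \;+\; \tfrac{1}{2}\sum_{z\in\Z} c_{z}\, \delta_{z}.
\end{align*}
The invertible case proceeds identically, with $T^{|z|}$ replaced by $T^{-z}$ and symmetry in $|z|$ following from the $T$-invariance of $\eta$.

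Next I would Fourier transform each summand separately. The Poisson-summation identity $\widehat{\omega_\Z} = \delta_{1}$ (the Dirac at the identity character on $\TT$) produces the Bragg peak $\tfrac{1}{2}\bigl(\textstyle\int f\, \mathrm{d}\eta\bigr)^{2}\delta_{1}$. For the remaining summand, the spectral gap $\lVert \Psi^{n} \rVert_{\operatorname{op}} \leq M q^{n}$ on $\operatorname{BV}$, combined with $fh \in \operatorname{BV}$ and the elementary bound $\lVert \cdot \rVert_{\infty} \leq C \lVert \cdot \rVert_{\operatorname{var}}$, yields a geometric estimate $|c_{z}| \leq C' q^{|z|}$. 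This makes $\sum_{z\in\Z}(c_{z}/2)(x,z)$ absolutely and uniformly convergent on $\TT$, so that it defines a continuous function $g$; hence the Fourier transform of the second summand is the absolutely continuous measure $g\,\omega_\TT$, and adding the two pieces gives the claimed decomposition.

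The main obstacle is careful bookkeeping of the Perron-Frobenius formalism and the identification $\widehat{\omega_\Z} = \delta_{1}$: neither is deep, but both must be phrased consistently with the normalisation of Fourier transforms on locally compact Abelian groups adopted in \Cref{section:continuous_fourier_transform}. One also needs the standard fact that the eigenfunction $h$ lies in $\operatorname{BV}$, so that $fh \in \operatorname{BV}$ and the spectral-gap estimate is applicable. Once these ingredients are in place, everything else reduces to substitution into the identities already established.
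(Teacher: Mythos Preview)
Your proposal is correct and follows essentially the same route as the paper: decompose $\Xi(T,\eta)(z)$ via $P^{|z|}=\Phi+\Psi^{|z|}$, substitute into \Cref{theorem:autocorrelation}, and then Fourier transform using the exponential decay of $(c_z)$; the paper verifies the transform by pairing with test functions in $\mathcal{P}(\TT)$, which amounts to the same Poisson-summation and $\ell^{1}$-coefficient arguments you invoke. One small remark: your handling of the invertible case is unnecessary, since the paper observes (citing \cite{MR594335}) that a piecewise monotonic map which is mixing with respect to a Borel measure must be non-invertible, so only the non-invertible branch of \Cref{Def:mu} is relevant here.
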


\begin{proof}
In \cite{MR594335} it is shown that if $T \colon [0, 1] \to [0, 1]$ is a piecewise continuous injective map, then $T$ is not mixing with respect to any Borel measure.  Therefore, by our hypothesis, we may assume that $T$ is non-invertible.

For $n\in\N$, recalling that the Haar measure $\omega_\TT$ is the normalised Lebesgue measure, we observe the following chain of equalities.
\begin{align*}
\Xi(T,\eta)(n)
= \int P^n(h \cdot f )\ f \ \mathrm{d}\Lambda
= \int \left(h \int f \cdot h\ \mathrm{d}\Lambda + \Psi^n(f \cdot h)\right)\ f\ \mathrm{d}\Lambda
= \left( \int f \cdot h\ \mathrm{d}\Lambda \right)^{2} + \int \Psi^n(f \cdot h)\ f\ \mathrm{d}\Lambda 
\end{align*}
This in tandem with \Cref{theorem:autocorrelation} allows us to write the autocorrelation $\gamma_{\mu_\sx}$ as
\begin{align}\label{eq:autocorralation_mixing_representartion}
2 \gamma_{\mu_\sx} 
= \left( \int f\ \mathrm{d}\eta \right)^2 \sum_{z\in\Z} \delta_z + \sum_{z\in\Z} c_z \delta_z.
\end{align}
For $\varphi = l * \widetilde{l} \in \mathcal{P}(\TT)$, we have that
\begin{align*}
\langle 2\gamma_{\mu_\sx}, \widehat{\varphi} \rangle 
=& \left( \int f\ \mathrm{d}\eta \right)^2 \sum_{z\in\Z}\ \widehat{\varphi}(z) + \sum_{z\in\Z}\ c_z\ \widehat{\varphi}(z) \\
=& \left( \int f\ \mathrm{d}\eta \right)^2 \sum_{z\in\Z}\ \widehat{\varphi}(z)\ (1,z) + \sum_{z\in\Z}\ c_z \int \varphi(x)\ (x,z)\ \mathrm{d}\omega_\TT(x) \\
=& \left( \int f\ \mathrm{d}\eta \right)^2 \varphi(1) + \int \varphi(x) \sum_{z\in\Z}\ c_z\ (x,z)\ \mathrm{d}\omega_\TT(x) 
= \left\langle \left( \int f\ \mathrm{d}\eta \right)^2 \delta_{1} + g\ \omega_\TT, \varphi \right\rangle.
\end{align*}
To split the series in the first equality we require that both series on the right-hand-side are absolutely convergent. This is true for the first series, since $\widehat{\varphi}(z) = \lvert \, \widehat{l}(z) \, \rvert^{2}$ and thus $\|\widehat{\varphi}\|_1 =  \sum_{z\in\Z}\ \widehat{\varphi}(z) = \sum_{z\in\Z}\ \widehat{\varphi}(z) (\mathds{1},z) =  \varphi(\mathds{1}) < \infty$.  To see that the second series is absolutely convergent, notice $(c_z)_{z\in\N}$ and $(c_{-z})_{z\in\N}$ are sequences of exponential decay and $\varphi$ is a continuous function on a compact space. With this at hand, we note that the integral and the sum of the second component in the second equality can be interchanged by Lebesgue's dominated convergence theorem.
\end{proof}

\begin{example}\label{Exmp:Mixing}
Let for $k\in\N_{\geq2}$ $T_k \colon [0, 1)\to [0, 1)$ be given by $T_k(x) = k x \bmod{1}$ and set $f(x) = x$. An elementary calculation shows that $T_k$ is a piecewise monotonic transformation and mixing with respect to $\Lambda$. Hence for $n\in\N$,
\begin{align*}
2\ \Xi(T,\eta)(n) = 
\int f \cdot f\circ T_k^n\mathrm{d}\Lambda 
= \sum_{m=0}^{k^n-1} \int_{m k^{-n}}^{(m+1)k^{-n}} \hspace{-0.7em} x (k^nx-m)\ \mathrm{d}x
= \frac{k^{-2n}}{6} \sum_{m=0}^{k^n-1} (3m+2)
= \frac{1}{4} \frac{k^n+ 1/3}{k^n}.
\end{align*}
Using \Cref{theorem:autocorrelation} and \eqref{eq:autocorralation_mixing_representartion}, this gives, for $z\neq 0$,
\begin{align*}
2\ \Xi(T_k,\eta)(z) = \frac{1}{4} \frac{k^{\lvert z \rvert}+1/3}{k^{\lvert z \rvert}} 
 = \frac{1}{4} + \int \Psi^{\lvert z \rvert}(f) \cdot f\ \mathrm{d}\Lambda.
\end{align*}
Hence,
\begin{align*}
2\gamma_{\mu_\sx} = \frac{1}{4} \sum_{z\in\Z\setminus \{0\}} \delta_z + \sum_{z\in\Z\setminus \{0\}} \frac{1}{4} \left( \frac{k^{\lvert z \rvert}+1/3 }{k^{\lvert z \rvert}}-1 \right) \delta_z + \frac{1}{3} \delta_0 = \frac{1}{4} \sum_{z\in\Z} \delta_z + \frac{1}{12} \sum_{z\in\Z} k^{-\lvert z \rvert} \delta_z.
\end{align*}
Combining this with \Cref{thm:difraction_mixing}, yields $\widehat{\gamma_{\mu_\sx}} = \delta_{1}/8 + g_k\ \omega_{\TT}$,  where 
\begin{align*}
g_k(x) = \sum_{z\in\Z} k^{-|z|} (x,z)/24 = \frac{k-k^{-1}}{k+k^{-1}-2\cos(2\pi x)}.
\end{align*}
See \Cref{fig1} for the graph of $g_{k}$ for different values of $k$.
\begin{figure}
  \centering
    \includegraphics[width=0.5\textwidth]{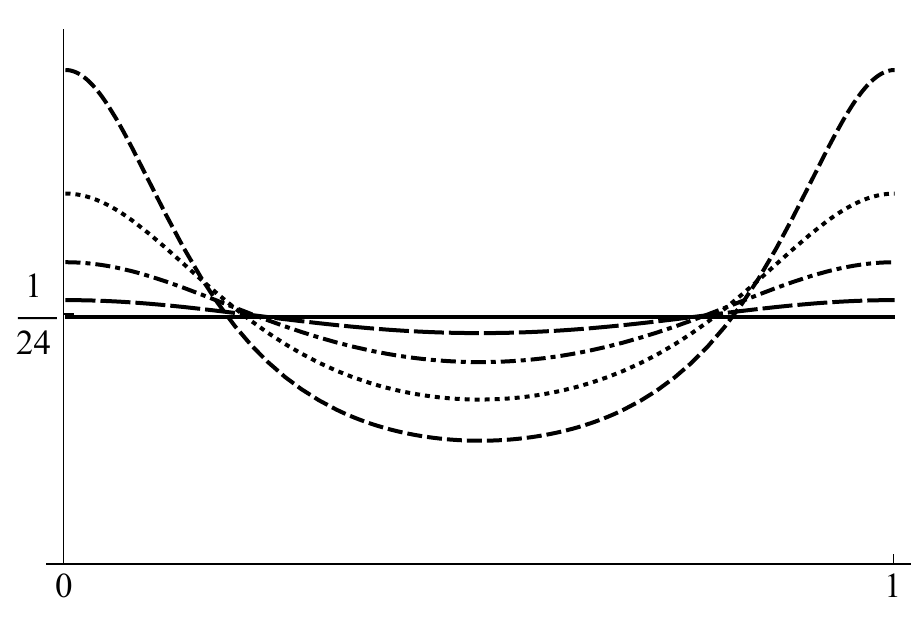}
      \caption{The density $g_{k}$ for $k=3,5,10,30$ of the diffraction measure $\gamma_{\mu_\sx}$, as determined in \Cref{Exmp:Mixing}, are approximating the constant density of hight $1/24$. This indicates that the decay of correlation for the observable $f$ decays faster for larger values of  $k\in \N_{\geq 2}$.}\label{fig1}
\end{figure}
\end{example}

\section{Rigid rotations}\label{sec:ridig_rotations}

We continue our exploration by considering dynamics given by a rotation of the unit interval.  Namely, for $\alpha \in \R^{+}$, we define $T_{\alpha} \colon [0, 1) \to [0, 1)$ by $T_{\alpha}(x) \coloneqq \{ x + \alpha \}$ and consider the dynamical system $([0, 1), T_{\alpha})$.  Here, $\{ t \}$ denotes the fractional part of $t \in \R$.  Note, the transformation $T_{\alpha}$ is topologically conjugate to the rotation map $z \mapsto \operatorname{e}^{2\pi i  \alpha} z$ on the unit circle $\TT$ in $\C$, where the conjugating map $\iota \colon [0, 1) \to \TT$ is given by $\iota(x) = \operatorname{e}^{2 \pi i x}$, for $x \in [0, 1)$.  In the case that $\alpha$ is irrational, the transformation $T_{\alpha}$ is uniquely ergodic, where the unique ergodic measure $\eta_{\alpha}$ is the Lebesgue measure $\Lambda$. On the other hand, if $\alpha = p/q$ with $p, q \in \N$ and $\operatorname{gcd}(p, q) = 1$, for each $w \in [0, 1)$, the measure $\eta_{\alpha} = \eta_{q, w} \coloneqq q^{-1} \sum_{k\in\Z_q} \delta_{\{ w + k/q \}}$ is an ergodic measure for $T_{\alpha}$.  In both cases $T_{\alpha}$ is not mixing with respect to $\eta_{\alpha}$ and so does not belong to the setting of \Cref{thm:difraction_mixing}.

For $q \in \N$, let $\Z_{q}$ denote the set $\{ 0, 1, \dots, q-1 \}$ equipped with the binary operation of addition modulo $q$ and, for $m \in \N$, let $[m]_{q}$ denote the unique element belonging to the intersection $\Z_{q} \cap \{ m + n q \colon n \in \N \}$.  Observe that $\Z_{q}$ is a locally compact Abelian group.

Let $\alpha=p/q$ with $p, q \in \N$ and $\gcd(p,q) = 1$. For $w \in [0, 1)$ and $f \in L^{1}([0, 1), \eta_{q,w})$ we have $\operatorname{supp}(\eta_{q,w})\cong \Z_q$ and $\Xi(T,\eta_{q,w})(z) = \Xi(T_\alpha,\eta_{q,w})([z]_{q})$ for $z\in\Z$. Further, for every $\sx\in \operatorname{supp}(\eta_{q,w})$, \Cref{theorem:autocorrelation} yields
\begin{align}
\gamma_{\mu_\sx} &= \sum_{z\in\Z}\Xi(T_\alpha,\eta_{q,w})(z)\ \delta_z,
\label{eq:theorem:autocorrelation_1}
\intertext{where $\mu_\sx$ denotes the $f$-weighted return time measure with respect to  $T_{\alpha}$ and with reference point $\sx$. 
If $\alpha\in\R^+\setminus\mathbb{Q}$, then $\operatorname{supp}(\eta_\alpha) = \operatorname{supp}(\Lambda) = [0,1)$, combining this with  and \Cref{theorem:autocorrelation} yields, for $f \in L^{1}([0, 1), \Lambda)$ and $\Lambda$-almost every $\sx$,}
\gamma_{\mu_\sx} &= \sum_{z\in\Z}\Xi(T_\alpha,\Lambda)(z)\ \delta_z,
\label{eq:theorem:autocorrelation_2}
\end{align}
where again $\mu_\sx$ denotes the $f$-weighted return time measure with respect to $T_{\alpha}$ and with reference point $\sx$. 

\begin{remark} \label{Rem:RiemannCase}
If $\alpha$ is irrational and $f$ is Riemann integrable, then the autocorrelation exists for every reference point $\sx$ and is independent of $\sx$. This follows by using an approximation argument due equi-distribution of the orbit of $\sx$ and the fact that $T_{\alpha}$ is unique ergodicity, see \cite{KuipersNiederreiter1974}. Due to the structure of $T_{\alpha}$, this result is in line with those of \cite{StrungaruRichard2017}, where one takes $(\R,\TT,\Z\times\iota(\alpha)^\Z)$ as the cut and project scheme. Further, as we will shortly see in \eqref{weightsAreConvolution}, the weights $\Xi(T_\alpha,\cdot)$ are given by a convolution of certain functions.  This relation is emphasised in the following theorem.
\end{remark}

\begin{theorem} \label{thm:diffraction_rotation}
Let $\alpha\in \R^{+}$, $w \in [0, 1)$ and $f \in L^{1}([0, 1), \eta_{\alpha})$.  For a given $\sx \in [0, 1)$, let $\mu_{\sx}$ denote the $f$-weighted return time measure with respect to $T_{\alpha}$ and with reference point $\sx$.
\begin{enumerate}
\item\label{lem:4_1(a)} If $\alpha=p/q$ with $p, q \in \N$ and $\gcd(p,q) = 1$, then for every reference point $\sx\in\operatorname{supp}(\eta_{q,w})$, the diffraction of $\mu_\sx$ is given by
\begin{align*}
\widehat{\gamma_{\mu_\sx}}
 = \sum_{m\in\Z_q}\widehat{\Xi}(T_\alpha,\eta_{q,w})(m)\ \delta_{(\iota\alpha)^m}
 = \sum_{m \in\Z_q} \lvert \widehat{f_{\alpha, \sx}}\rvert^{2} (m) \ \delta_{(\iota\alpha)^m},
\end{align*}
where $f_{\alpha, \sx}(k) \coloneqq f(T_{\alpha}^{k}(\sx))$ for $k \in \Z_{q}$.
\item\label{lem:4_1(b)} 
If $\alpha\in\R^+\setminus\mathbb{Q}$, then the diffraction of $\mu_\sx$ is, for $\Lambda$-almost every reference points $\sx \in [0,1)$, given by
\begin{align*}
\widehat{\gamma_{\mu_\sx}} 
 = \sum_{m\in\Z}\widehat{\Xi}(T_\alpha,\Lambda)(m)\ \delta_{(\iota\alpha)^m}
 = \sum_{m \in\Z} \lvert \widehat{f_{\iota}}\rvert^{2}(m) \ \delta_{(\iota\alpha)^m},
\end{align*}
where $f_{\iota}(x) \coloneqq f(\iota^{-1} x)$ for $x \in \TT$. Additionally, if $f$ is Riemann integrable, then the statement holds for every reference point $\sx \in [0, 1)$.
\end{enumerate}
\end{theorem}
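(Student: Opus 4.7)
Both parts will proceed by Fourier-inverting the weighted Dirac comb on $\Z$ furnished by \eqref{eq:theorem:autocorrelation_1} and \eqref{eq:theorem:autocorrelation_2}.

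For part (a), I would first observe that $T_\alpha^q$ acts as the identity on $\operatorname{supp}(\eta_{q,w})$, so that the coefficients $\tilde\Xi(k) := \Xi(T_\alpha, \eta_{q,w})(k)$ are $q$-periodic in $k \in \Z$. The Fourier transform of a $q$-periodic Dirac comb on $\Z$ is supported on the annihilator of $q\Z$ in $\TT$, namely the group $\{e^{2\pi i m/q} : m \in \Z_q\}$ of $q$-th roots of unity; since $\gcd(p,q) = 1$, the map $m \mapsto (\iota\alpha)^m = e^{2\pi i p m/q}$ is a bijection of $\Z_q$ onto this set, which yields the support structure in the statement. The weight at each such point is the discrete Fourier coefficient of $\tilde\Xi$ on $\Z_q$. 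Writing $\tilde\Xi(k) = q^{-1} \sum_{l \in \Z_q} f_{\alpha,\sx}(l)\, f_{\alpha,\sx}([l-k]_q)$ exhibits $\tilde\Xi$, up to normalization, as the circular autocorrelation of $f_{\alpha,\sx}$ on $\Z_q$, and the convolution theorem on $\Z_q$ then factorizes its discrete Fourier transform as $|\widehat{f_{\alpha,\sx}}|^2$. Both stated equalities follow after reconciling the Fourier normalization on $\Z_q$.

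For part (b), I would conjugate by $\iota$ to rewrite
\[
\Xi(T_\alpha, \Lambda)(z) = \int_\TT f_\iota\!\left((\iota\alpha)^z\, y\right) f_\iota(y)\, \mathrm{d}\omega_\TT(y).
\]
Expanding $f_\iota \in L^2(\TT, \omega_\TT)$ in its Fourier series and using orthonormality of characters yields
\[
\Xi(T_\alpha, \Lambda)(z) = \sum_{m \in \Z} |\widehat{f_\iota}(m)|^2 \, (z, (\iota\alpha)^m),
\]
absolutely convergent by Parseval. To identify $\widehat{\gamma_{\mu_\sx}}$ I would test against $\varphi = l * \widetilde{l} \in \mathcal{P}(\TT)$: substituting the expansion above into $\langle \gamma_{\mu_\sx}, \widehat\varphi \rangle = \sum_z \Xi(T_\alpha, \Lambda)(z)\, \widehat\varphi(z)$ and interchanging the two summations -- justified because $\sum_m |\widehat{f_\iota}(m)|^2 < \infty$ and $\widehat\varphi = |\widehat l|^2$ is summable -- the inner sum $\sum_z \widehat\varphi(z)\, (z, (\iota\alpha)^m)$ collapses to $\varphi((\iota\alpha)^m)$ by Fourier inversion on $\TT$, so that $\langle \widehat{\gamma_{\mu_\sx}}, \varphi \rangle = \sum_m |\widehat{f_\iota}(m)|^2\, \varphi((\iota\alpha)^m)$ as claimed.

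The final clause -- that for Riemann integrable $f$ the identity holds at every $\sx \in [0,1)$ rather than only $\Lambda$-almost every $\sx$ -- is automatic, since the only almost-sure step in \Cref{theorem:autocorrelation} is the application of Birkhoff's ergodic theorem to the products $f \cdot f \circ T_\alpha^z$, and for irrational $\alpha$ and Riemann integrable $f$, Weyl's equidistribution theorem upgrades this to pointwise convergence at every reference point, as already recorded in \Cref{uniquelyErgodic} and \Cref{Rem:RiemannCase}. The hard part will be careful bookkeeping of the Fourier normalizations on $\Z_q$ in part (a) so that the claimed coefficients appear without spurious powers of $q$, together with rigorously justifying the termwise Fourier interchange in part (b) given that $\gamma_{\mu_\sx}$ is only translation bounded.
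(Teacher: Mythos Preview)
Your proposal is correct and follows essentially the same route as the paper: both arguments identify $\Xi$ as a convolution (on $\Z_q$ in part~(a), on $\TT$ in part~(b)) so that $\widehat{\Xi}=|\widehat{f_{\alpha,\sx}}|^2$ resp.\ $|\widehat{f_\iota}|^2$, then test the autocorrelation comb against $\varphi\in\mathcal{P}(\TT)$, interchange the double sum, and use Fourier inversion to collapse $\sum_z\widehat{\varphi}(z)\,((\iota\alpha)^m,z)$ to $\varphi((\iota\alpha)^m)$. The only cosmetic differences are that you phrase part~(a) via the annihilator of $q\Z$ where the paper computes directly, and you invoke Parseval explicitly in part~(b) where the paper just says ``analogously''; the Riemann clause is handled identically by appeal to \Cref{Rem:RiemannCase}.
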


\begin{proof}
First we show Part \eqref{lem:4_1(a)}. By definition
\begin{align}\label{eq:theorem:autocorrelation_3}
\Xi(T_\alpha,\eta_{q,w})(k)
= \int f \circ T_{\alpha}^{-k}\cdot f\ \mathrm{d}\eta_{q,w}
= \int f_{\alpha, \sx}(l-k)\cdot f_{\alpha, \sx}(l)\ \mathrm{d}\omega_{\Z_q}(l) = f_{\alpha, \sx}*\widetilde{f_{\alpha, \sx}} (k),
\end{align}
and so $\widehat{\Xi}(T_\alpha,\eta_{q,w})(m) = (f_{\alpha, \sx}*\widetilde{f_{\alpha, \sx}})^\wedge(m) = \lvert\widehat{f_{\alpha, \sx}}\rvert^2(m)$.
We let $(\cdot, \cdot)_{\alpha} \colon \Z_{q} \times \Z_{q} \to \TT$ denote the character product induced by $T_{\alpha}$ and defined by $(k, z)_\alpha \coloneqq \exp(2\pi i\ \alpha\ k\ z)$, and let $(\cdot, \cdot) \colon \TT \times \Z_{q} \to \TT$ denote the character product defined by $(\iota x,z) \coloneqq \exp(2\pi i\  x\ z)$.  Letting $\varphi\in\mathcal{P}(\TT)$, we observe the following chain of equalities.
\begin{align*}
\sum_{z\in\Z} \Xi(T_\alpha,\eta_{q,w})(z)\ \widehat{\varphi}(z)
= \sum_{z\in\Z} \sum_{k\in\Z_q} \lvert\widehat{f_{\alpha,\sx}}\rvert^2(k)\ (k,z)_\alpha\ \widehat{\varphi}(z)
= \sum_{k\in\Z_q} \lvert\widehat{f_{\alpha,\sx}}\rvert^2(k) \sum_{z\in\Z} \widehat{\varphi}(z)\  ((\iota \alpha)^{k},z)
= \sum_{k\in\Z_q} \lvert\widehat{f_{\alpha,\sx}}\rvert^2(k)\ \varphi((\iota \alpha)^k)
\end{align*}
In the third equality we have used Lebesgue's dominated convergence theorem. For this, observe that the function $\lvert\widehat{f_{\alpha,\sx}}\rvert^2$ is bounded and that $\lim_{N\to\infty}\sum_{z=-N}^N \widehat{\varphi}(z)\  ((\iota\alpha)^k, z) = \varphi((\iota\alpha)^k)$, which is bounded for all $k\in\Z$, since $\varphi$ is a continuous function on $\TT$.  As a result
\begin{align*}
\lvert\widehat{f_{\alpha,\sx}}\rvert^2(k) \left\lvert\sum_{-N \leq z \leq N} \widehat{\varphi}(z)\  (\alpha k,z) \right\rvert
\leq \lvert\widehat{f_{\alpha,\sx}}\rvert^2(k) \!\!\!\! \sum_{-N \leq z \leq N} \!\! \lvert\widehat{\varphi}\rvert (z)
= \lvert\widehat{f_{\alpha,\sx}}\rvert^2(k) \!\!\!\! \sum_{-N \leq z \leq N} \!\! \widehat{\varphi}(z)
= \lvert\widehat{f_{\alpha,\sx}}\rvert^2(k) \!\!\!\! \sum_{-N \leq z \leq N} \!\! \widehat{\varphi}(z) (1, z)
\leq \lvert\widehat{f_{\alpha,\sx}}\rvert^2(k) \varphi(0),
\end{align*}
has a global bound, for all $k$ and $N$.  Hence $\langle \gamma_{\mu_\sx}, \widehat{\varphi} \rangle = \langle \widehat{\gamma_{\mu_\sx}}, \varphi \rangle$, which completes the proof of Part \eqref{lem:4_1(a)}.

Part \eqref{lem:4_1(b)} follows analogously to Part \eqref{lem:4_1(a)}, where one replaces $\eta_{\alpha,\sx}$ by $\eta_\alpha$, $(k,z)_\alpha$ by $((\iota \alpha)^k, z)$ and $f_{\alpha,\sx}$ by $f_{\iota}$. In particular,
\begin{align}
\label{eq:theorem:autocorrelation_4}
\Xi(T_{\alpha},\eta_\alpha)(k)
= \int f \circ T_{\alpha}^{-k}\cdot f\ \mathrm{d}\eta_\alpha
= \int f_\iota((\iota x) \cdot (\iota \alpha)^{-k})\cdot f_\iota(\iota x)\ \mathrm{d}\Lambda(x)
= f_\iota*\widetilde{f_\iota}((\iota\alpha)^k).
\end{align}
The final statement follows by using identical arguments to those given in \Cref{Rem:RiemannCase}.
\end{proof}

Here we emphasise that, for $w \in[0,1)$ and $\alpha \in \R^{+}$, when we write $f_{\iota}*\widetilde{f_{\iota}}((\iota \alpha)^{z})$, we mean the convolution with respect to $\Lambda$ evaluated at $(\iota \alpha)^{z} \in \TT$, for $z \in \Z$, and in the case that $\alpha=p/q$ with $p, q \in \N$ and $\operatorname{gcd}(p, q) = 1$, when we write $f_{\alpha,w}*\widetilde{f_{\alpha,w}}(k)$, we mean the convolution with respect to $\eta_{q,w}$ evaluated at $k \in \Z_{q}$, where $w \in [0, 1)$.  Namely, 
\begin{align}\label{weightsAreConvolution}
f_{\iota}*\widetilde{f_{\iota}}((\iota \alpha)^k)
= \int f\circ T_{\alpha}^{-k}\cdot f\ \mathrm{d}\Lambda
= \Xi(T_\alpha,\Lambda)(k)
\;\;\;\, \text{and} \;\;\;\,
f_{\alpha,w}*\widetilde{f_{\alpha,w}}(k)
= \int_{[0,1)} f\circ T_\alpha^{-k}\cdot f\ \mathrm{d}\eta_{\alpha,w}
= \Xi(T_\alpha,\eta_{q,w})(k).
\end{align}
For $\alpha \in \R^{+}$ and a sequence $(\alpha_i)_{i\in\N}$ in $\R^{+}$ which converges to $\alpha$, we require that the sequence $(\alpha_i)_{i\in\N}$ does not attain the value $\alpha$ infinitely often.  This condition is important as in the following lemma, if $\alpha = p/q$ with $p,q \in \N$ and $\operatorname{gcd}(p, q)=1$, we show that the pointwise limit of $\Xi(T_{\alpha_i}, \eta_{\alpha_{i}})$ does not coincide with $\Xi(T_{\alpha}, \eta_{q, w})$, for any $w \in [0, 1]$.  In forthcoming examples (\Cref{Example:1,Example:2}) it is shown that the chosen sequence of functions is the correct choice to guarantee convergence in \Cref{thm:convergence}.

\begin{lemma} \label{lem:eta_rotation_convergence}
Let $f\colon[0,1) \to \R^+$ be Riemann integrable, $\sx \in [0, 1)$, $\alpha \in \R^{+}$ and $(\alpha_i)_{i\in\N}$ be a sequence in $\R^{+}$ that converges to $\alpha$ with $\alpha_i\neq\alpha$ for $i \in \N$. When $\alpha_{i}$ is rational we let $p_{i}, q_{i} \in \N$ be such that $\operatorname{gcd}(p_{i}, q_{i}) = 1$ and $\alpha_{i} = p_{i}/q_{i}$. The sequence of functions given by $\Xi(T_{\alpha_i},\eta_{\alpha_i,y})$ if $\alpha_i$ is rational, and by $\Xi(T_{\alpha_i},\Lambda)$, if $\alpha_i$ is irrational, converge uniformly to $\Xi(T_\alpha,\Lambda)$.
\end{lemma}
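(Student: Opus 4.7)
My plan is to lift every $\Xi$-value to an evaluation, at a point of $\TT$, of a continuous (or discretely approximating) autocorrelation function of $f$ on $\TT$; the lemma then follows from uniform convergence on $\TT$ combined with continuity.

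I begin by defining $g\colon\TT\to\R$ by $g(\iota\beta)\coloneqq\int_{[0,1)} f(x)\,f(\{x-\beta\})\,\mathrm{d}\Lambda(x)$, which is well-defined since $\beta\mapsto f(\{\cdot-\beta\})$ is $1$-periodic. As $f$ is Riemann integrable, it is bounded and hence in $L^2([0,1),\Lambda)$, so continuity of translation in $L^2$ makes $g$ continuous on $\TT$. From \eqref{weightsAreConvolution}, $\Xi(T_\alpha,\Lambda)(k)=g((\iota\alpha)^k)$ for every $\alpha\in\R^+$ and $k\in\Z$, so when $\alpha_i$ is irrational the claim reduces at once to continuity of $g$ together with $(\iota\alpha_i)^k\to(\iota\alpha)^k$ for each fixed $k$.

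For rational $\alpha_i=p_i/q_i$ with $\gcd(p_i,q_i)=1$, the hypothesis $\alpha_i\to\alpha$ with $\alpha_i\neq\alpha$ forces $q_i\to\infty$, because rationals with bounded denominator are uniformly isolated in $\R$. I introduce, for each $\sx\in[0,1)$, the discrete autocorrelation $g_i^{\sx}\colon\TT\to\R$ defined by
\[
g_i^{\sx}(\iota\beta)\coloneqq q_i^{-1}\sum_{j=0}^{q_i-1} f(\{\sx+j/q_i\})\,f(\{\sx+j/q_i-\beta\}),
\]
so that $\Xi(T_{\alpha_i},\eta_{q_i,\sx})(k)=g_i^{\sx}((\iota\alpha_i)^k)$. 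The crux is to show that $\|g_i^{\sx}-g\|_\infty\to 0$ as $q_i\to\infty$, uniformly in $\sx\in[0,1)$. Given $\varepsilon>0$, sandwich $f$ between continuous non-negative functions $f_1\leq f\leq f_2$ on $\TT$ (viewing $f$ as a function on the torus with possible discontinuity at $\iota(0)$) with $\int(f_2-f_1)\,\mathrm{d}\Lambda<\varepsilon$ and $\|f_l\|_\infty\leq\|f\|_\infty+1$ for $l=1,2$. Using translation invariance of $\Lambda$, the kernel difference $\int(f_2(x)f_2(\{x-\beta\})-f_1(x)f_1(\{x-\beta\}))\,\mathrm{d}\Lambda(x)$ is bounded by $C\varepsilon$ uniformly in $\beta$, where $C$ depends only on $\|f\|_\infty$. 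The kernels $(x,\beta)\mapsto f_l(x)f_l(\{x-\beta\})$ are jointly continuous on the compact space $\TT\times\TT$, hence their Riemann sums $q_i^{-1}\sum_j f_l(\{\sx+j/q_i\})f_l(\{\sx+j/q_i-\beta\})$ converge to the corresponding integrals uniformly in $(\sx,\beta)$ by equicontinuity. A standard sandwich then yields $\limsup_{q_i\to\infty}\sup_{\sx,\beta}|g_i^{\sx}(\iota\beta)-g(\iota\beta)|\leq C\varepsilon$, and letting $\varepsilon\to 0$ gives the uniform estimate.

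Combining the pieces,
\[
|\Xi(T_{\alpha_i},\eta_{q_i,\sx})(k)-\Xi(T_\alpha,\Lambda)(k)|\leq\|g_i^{\sx}-g\|_\infty+|g((\iota\alpha_i)^k)-g((\iota\alpha)^k)|;
\]
the first summand vanishes uniformly in $(k,\sx)$ by the preceding step, and the second vanishes, for each fixed $k$ (and uniformly on bounded subsets of $\Z$), by continuity of $g$. The irrational case is subsumed into the second summand alone. I expect the principal difficulty to lie in establishing the uniform Riemann-sum convergence in the previous paragraph, since $f$ is only Riemann integrable rather than continuous; this is handled by the continuous sandwich together with translation invariance of $\Lambda$.
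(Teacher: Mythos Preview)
Your argument runs along the same lines as the paper's: split via the triangle inequality into a term $|g((\iota\alpha_i)^k)-g((\iota\alpha)^k)|$ handled by continuity of $g=f_\iota*\widetilde{f_\iota}$, and a term comparing the discrete average $g_i^{\sx}$ with the integral $g$, handled via Riemann integrability. The only technical difference lies in the second term: you sandwich $f$ between continuous $0\le f_1\le f\le f_2$ and invoke uniform convergence of Riemann sums for the continuous kernels $f_l(x)f_l(\{x-\beta\})$, whereas the paper works directly with Darboux upper/lower sums over the partition $I_m=[\sx_i+m/q_i,\sx_i+(m+1)/q_i)$ and bounds the error by $2\lVert f\rVert_\infty\sum_m q_i^{-1}(\sup_{I_m}f-\inf_{I_m}f)$. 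Both devices yield a bound independent of $z$, so this piece is genuinely uniform in $z$ (and in $\sx$).

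You are in fact more scrupulous than the paper on the first term. You note that $|g((\iota\alpha_i)^k)-g((\iota\alpha)^k)|$ vanishes only for each fixed $k$ (and uniformly on bounded subsets of $\Z$). The paper asserts full uniform convergence here, citing continuity of $g$ on the compact space $\TT$; but uniform continuity of $g$ would yield the claim only if $(\iota\alpha_i)^k\to(\iota\alpha)^k$ uniformly in $k\in\Z$, which fails since $k(\alpha_i-\alpha)$ is unbounded in $k$. For a concrete obstruction take $f(x)=1+\cos(2\pi x)$, so that $g(\iota\beta)=1+\tfrac12\cos(2\pi\beta)$: for any $\alpha_i\ne\alpha$ there exist $k_i\in\Z$ with $\{k_i(\alpha_i-\alpha)\}$ close to $1/2$, forcing $|g((\iota\alpha_i)^{k_i})-g((\iota\alpha)^{k_i})|$ to stay near $1$. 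Hence the word ``uniformly'' in the lemma is too strong; what both your proof and the paper's actually establish is pointwise convergence in $z$ (uniform on bounded subsets), and that is precisely what the downstream vague-convergence statements require.
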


\begin{proof}
In the case that $\alpha_{i} \not\in \mathbb{Q}$ for all $i \in \N$, the result is a consequence of the fact that  $f_\iota*\widetilde{f_\iota}\colon\TT\to\R$ is a continuous function on $\TT$ -- a compact space.  Assume that $\alpha_i \in \mathbb{Q}$ for all $i \in \N$.  Observe that
\begin{align}\label{eq:eq1_Lemma4.2}
\left\lvert f_\iota*\widetilde{f_\iota}((\iota\alpha)^z) - f_{\alpha_i,\sx}*\widetilde{f_{\alpha_i,\sx}}([z]_{q_i}) \right\rvert \leq& \left\lvert f_\iota*\widetilde{f_\iota}((\iota\alpha)^z) -f_{\iota}*\widetilde{f_{\iota}}((\iota\alpha_i)^z)\right\rvert + \left\lvert f_{\iota}*\widetilde{f_{\iota}}((\iota\alpha_i)^z) - f_{\alpha_i,\sx}*\widetilde{f_{\alpha_i,\sx}}([z]_{q_i}) \right\rvert.
\end{align}
The first first term on the right-hand-side of \eqref{eq:eq1_Lemma4.2} converges to zero, by an analogous argument to that given in the case when $\alpha_i \not\in \mathbb{Q}$ for all $i \in \N$.  To complete the proof, we show that the second term on the right-hand-side of \eqref{eq:eq1_Lemma4.2} also converges to zero.  Here, we will make use of the Riemann integrability of $f$.  Let $i \in \N$, $\sx_{i} = \min \Omega_{\alpha_{i}}(\sx)$ and $z \in \Z$.  Setting $l = [z]_{q_{i}}$ and
\begin{align*}
I_{m} \coloneqq
\begin{cases}
\left[\sx_{i} + \genfrac{}{}{}{}{m}{q_i}, \sx_{i} + \genfrac{}{}{}{}{m + 1}{q_i}\right] & \text{if} \; m \in \{ 0, 1, \dots, q_{i}-2\},\\[1em]
[0, \sx_{i}) \cup \left[\sx_{i} + \genfrac{}{}{}{}{q_{i}-1}{q_i}, 1\right] & \text{if} \; m = q_{i} - 1,
\end{cases}
\end{align*}
we have the following chain of inequalities. 
\begin{align*}
& \left\lvert f_{\iota}*\widetilde{f_{\iota}}((\iota\alpha_i)^z) - f_{\alpha_i,\sx_{i}}*\widetilde{f_{\alpha_i,\sx_{i}}}([z]_{q_i}) \right\rvert \nonumber \\
&= \left| \int f(x)\cdot f\left( \left\{ x-\genfrac{}{}{}{1}{l}{q_i} \right\} \right)\ \mathrm{d}\Lambda(x) - q_i^{-1} \sum_{m\in\Z_{q_i}} f\left( \left\{ \sx_{i}+\genfrac{}{}{}{1}{m}{q_i} \right\} \right)\cdot f\left( \left\{ \sx_{i}+\genfrac{}{}{}{1}{m-l}{q_i} \right\} \right) \right| \nonumber \\
&= \left| \sum_{m\in\Z_{q_i}} \int \chf{_{I_{_m}}}\!\!(x) \left( f(x)\cdot f\left( \left\{ x-\genfrac{}{}{}{1}{l}{q_i} \right\} \right) - f\left( \left\{ \sx_{i}+\genfrac{}{}{}{1}{m}{q_i} \right\} \right)\cdot f\left(\left\{ \sx_{i}+\genfrac{}{}{}{1}{m}{q_i}-\genfrac{}{}{}{1}{l}{q_i} \right\} \right) \right) \ \mathrm{d}\Lambda(x) \right| \nonumber \\
&= \left| \sum_{m\in\Z_{q_i}} \int \chf{_{I_{_m}}}\!\!(x) \left( f(x)\left(f\left( \left\{ x-\genfrac{}{}{}{1}{l}{q_i} \right\}\right) - f\left( \left\{ \sx_{i}+\genfrac{}{}{}{1}{m}{q_i}-\genfrac{}{}{}{1}{l}{q_i} \right\} \right) \right) + f\left( \left\{ \sx_{i}+\genfrac{}{}{}{1}{m}{q_i}-\genfrac{}{}{}{1}{l}{q_i} \right\} \right) \left( f(x)-f\left( \left\{ \sx_{i}+\genfrac{}{}{}{1}{m}{q_i} \right\} \right) \right) \right) \ \mathrm{d}\Lambda(x) \right| \nonumber \\
&= \left\lvert \sum_{m\in\Z_{q_i}} \int \chf{_{I_{[m -l]_{q_{i}}}}}\!\!\!\!(x) \cdot f\left( \left\{ x+\genfrac{}{}{}{1}{l}{q_i} \right\} \right)\left(f(x) - f\left(\left\{ \sx_{i}+\genfrac{}{}{}{1}{m-l}{q_i} \right\} \right) \right)\ \mathrm{d}\Lambda(x) + \!\! \int \chf{_{I_{_m}}}\!\!(x) \cdot f\left(\left\{\sx_{i}+\genfrac{}{}{}{1}{m - l}{q_i} \right\} \right) \left( f(x)-f\left( \left\{ \sx_{i}+\genfrac{}{}{}{1}{m}{q_i} \right\} \right) \right)\ \mathrm{d}\Lambda(x) \right\rvert \nonumber \\
&\leq \lVert f \rVert_{\infty}  \sum_{m\in\Z_{q_i}} q_i^{-1} \left( \sup \left\{ f(x) \colon x \in I_{[m -l]_{q_{i}}} \right\} - \inf \left\{ f(x) \colon x \in I_{[m -l]_{q_{i}}} \right\} + \sup \left\{ f(x) \colon x \in I_{m} \right\} - \inf \left\{ f(x) \colon x \in I_{m} \right\}  \right) \nonumber\\
&= 2\ \lVert f \rVert_{\infty} \sum_{m\in\Z_{q_i}} q_i^{-1} \left( \sup \left\{ f(x) \colon x \in I_{m} \right\} - \inf \left\{ f(x) \colon x \in I_{m} \right\} \right) 
\end{align*}
Since $\lim_{i \to \infty} q_i = \infty$, this latter term converges to zero by the Riemann property of Darboux.  Moreover, this latter term is independent of $z$, yielding uniform convergence.
\end{proof}

\begin{lemma} \label{lem:xi_varies}
Let $f \colon [0,1) \to \R^+_{0}$ be Riemann integrable, $\alpha \in [0,1)$ and $(\alpha_i)_{i \in \N}$ be a sequence in $\R^{+}$ such that $\lim_{i \to \infty} \alpha_{i} = \alpha$ with $\alpha_i \neq \alpha$, for all $i \in \N$.  Let $(\sx_i)_{i\in \N}$ denote a sequence of reference points in $[0, 1)$ and, for $i \in \N$, let $\mu_{\sx_{i}}$ denote the $f$-weighted return time measure with respect to $T_{\alpha_{i}}$ and with reference point $\sx_{i}$.  The sequence of autocorrelations $(\gamma_{\mu_{\sx_i}})_{i\in\N}$ attains a vague-limit $\gamma$ given by
\begin{align*}
\gamma = \sum_{z\in\Z} \Xi(T_\alpha,\Lambda)\ \delta_z.
\end{align*}
\end{lemma}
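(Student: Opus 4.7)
The plan is to test the putative vague limit against an arbitrary test function $\varphi\in\mathcal{C}_c(\Z)$ and reduce the statement to the uniform convergence of coefficients already established in the preceding lemma. Since $\varphi$ has compact support in $\Z$, the support $F\coloneqq\operatorname{supp}(\varphi)$ is a finite subset of $\Z$, so every pairing $\langle\gamma_{\mu_{\sx_i}},\varphi\rangle$ will be a finite linear combination, and no delicate tail estimates are needed.

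First I would, for each $i\in\N$, apply \Cref{theorem:autocorrelation} together with the discussion culminating in equations \eqref{eq:theorem:autocorrelation_1} and \eqref{eq:theorem:autocorrelation_2} to write
\begin{align*}
\gamma_{\mu_{\sx_i}}=\sum_{z\in\Z}\Xi_i(z)\,\delta_z,
\qquad\text{where}\qquad
\Xi_i(z)\coloneqq
\begin{cases}
\Xi(T_{\alpha_i},\eta_{\alpha_i,\sx_i})(z) & \text{if }\alpha_i\in\mathbb{Q},\\
\Xi(T_{\alpha_i},\Lambda)(z) & \text{if }\alpha_i\notin\mathbb{Q}.
\end{cases}
\end{align*}
In the rational case, $T_{\alpha_i}$ is uniquely ergodic on the finite orbit $\operatorname{supp}(\eta_{\alpha_i,\sx_i})$, so the autocorrelation is well-defined at every reference point $\sx_i\in\operatorname{supp}(\eta_{\alpha_i,\sx_i})$ (with the orbit being generated by $\sx_i$ itself); in the irrational case, Riemann integrability of $f$ ensures, as noted in \Cref{Rem:RiemannCase}, that the autocorrelation exists for every reference point and is independent of the choice of $\sx_i$. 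In particular, the formula above is legitimate for the specific sequence $(\sx_i)_{i\in\N}$ given in the statement.

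Next, I would pair both sides against $\varphi$ to obtain
\begin{align*}
\langle \gamma_{\mu_{\sx_i}},\varphi\rangle \;=\; \sum_{z\in F}\Xi_i(z)\,\varphi(z),
\qquad
\langle \gamma,\varphi\rangle \;=\; \sum_{z\in F}\Xi(T_\alpha,\Lambda)(z)\,\varphi(z).
\end{align*}
By \Cref{lem:eta_rotation_convergence}, the sequence $(\Xi_i)_{i\in\N}$ converges to $\Xi(T_\alpha,\Lambda)$ uniformly on $\Z$, and in particular pointwise on the finite set $F$. Therefore
\begin{align*}
\bigl|\langle \gamma_{\mu_{\sx_i}},\varphi\rangle-\langle\gamma,\varphi\rangle\bigr|
\;\leq\; \lVert\varphi\rVert_\infty\,|F|\,\sup_{z\in F}\bigl|\Xi_i(z)-\Xi(T_\alpha,\Lambda)(z)\bigr|
\;\longrightarrow\; 0
\end{align*}
as $i\to\infty$, which establishes vague convergence.

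The main obstacle in this lemma has essentially already been handled by \Cref{lem:eta_rotation_convergence}, whose proof uses the Darboux-Riemann property of $f$ to control the discrepancy between the continuous convolution $f_\iota\ast\widetilde{f_\iota}$ and its discrete analogue $f_{\alpha_i,\sx_i}\ast\widetilde{f_{\alpha_i,\sx_i}}$. What remains here is only a bookkeeping exercise: ensuring the autocorrelation formula from \Cref{theorem:autocorrelation} applies for each specific reference point $\sx_i$ (which is why the assumption of Riemann integrability, rather than mere integrability, is carried through), and then observing that vague convergence against compactly supported test functions reduces instantly to the already-proved uniform convergence of the weight sequences.
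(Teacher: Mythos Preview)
Your approach is essentially identical to the paper's: express each $\gamma_{\mu_{\sx_i}}$ via \Cref{theorem:autocorrelation} and \eqref{eq:theorem:autocorrelation_1}--\eqref{eq:theorem:autocorrelation_4}, reduce vague convergence against $\varphi\in\mathcal{C}_c(\Z)$ to convergence of the finitely many coefficients $\Xi_i(z)$, and then appeal to \Cref{lem:eta_rotation_convergence}.

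One small point deserves attention. \Cref{lem:eta_rotation_convergence} is stated for a \emph{fixed} reference point $\sx$, whereas here the reference points $\sx_i$ vary with $i$. This is precisely why, in the rational case, the paper's own proof invokes ``an analogous argument as given in the proof of \Cref{lem:eta_rotation_convergence}'' rather than citing the lemma directly. The fix is immediate once noticed: the final Darboux-type estimate in that proof,
\[
2\,\lVert f\rVert_\infty \sum_{m\in\Z_{q_i}} q_i^{-1}\bigl(\sup_{I_m} f - \inf_{I_m} f\bigr),
\]
depends only on the mesh $1/q_i$ of the partition and not on where the partition is anchored, so it is uniform in the reference point and the argument carries over verbatim to a varying sequence $(\sx_i)_{i\in\N}$. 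With this remark your proof is complete and matches the paper's.
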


\begin{proof}
If $\alpha_i$ is irrational, since $f_\iota*\widetilde{f_\iota}$ is independent of the starting point $\sx_i$ the result is a direct consequence of \eqref{eq:theorem:autocorrelation_2}, \eqref{eq:theorem:autocorrelation_4} and \Cref{lem:eta_rotation_convergence}.  If $\alpha_i = p_{i}/q_{i}$ with $p_{i}, q_{i} \in \N$ and $\operatorname{gcd}(p_{i}, q_{i}) = 1$ for all $i \in \N$, then the result follows from \eqref{eq:theorem:autocorrelation_1}, \eqref{eq:theorem:autocorrelation_3} and an analogous argument as given in the proof of \Cref{lem:eta_rotation_convergence}.
\end{proof}

\begin{theorem}\label{thm:convergence}
Let $f\colon[0,1)\to\R^+_{0}$ be Riemann integrable and let $\alpha \in \R^{+}$.  Fix a sequence $(f_{i})_{i \in \N}$ of non-negative Riemann integrable functions which converge uniformly to $f$ on $[0, 1]$, and fix a sequence $(\alpha_i)_{i \in \N}$ in $\R^{+}$ with $\lim_{i \to \infty} \alpha_{i} = \alpha$ and $\alpha_i \neq \alpha$ for all $i \in \N$.  Let $(\sx_i)_{i\in \N}$ denote a sequence of reference points in $[0, 1)$ and, for $i \in \N$, let $\mu_{\sx_{i}}$ denote the $f_{i}$-weighted return time measure with respect to $T_{\alpha_{i}}$ and with reference point $\sx_{i}$.  The sequence of autocorrelations $(\gamma_{\mu_{\sx_i}})_{i\in\N}$ attains a vague-limit $\gamma$ given by
\begin{align*}
\gamma = \sum_{z\in\Z} \Xi(T_\alpha,\Lambda)(z)\ \delta_z.
\end{align*}
Hence, by \Cref{thm:diffraction_rotation}\eqref{lem:4_1(b)},
\begin{align*}
\widehat{\gamma} = \sum_{m\in\Z} \widehat{\Xi}(T_\alpha,\Lambda)(m)\ \delta_{(\iota \alpha)^{m}}.
\end{align*}
\end{theorem}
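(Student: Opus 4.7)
The plan is to build on \Cref{lem:xi_varies}, which handles the variation in the dynamics for a fixed observable, by absorbing the additional variation in the sequence of observables $(f_i)_{i\in\N}$ through a uniform-boundedness argument. For clarity I write $\Xi_{g}(T,\eta)$ to render explicit the dependence of $\Xi(T,\eta)$ on the chosen observable $g$.

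First I would reduce vague convergence of the autocorrelations to pointwise convergence of their coefficients. Applying \Cref{theorem:autocorrelation} with observable $f_i$ and rotation $T_{\alpha_i}$ gives $\gamma_{\mu_{\sx_i}} = \sum_{z\in\Z}\Xi_{f_i}(T_{\alpha_i},\eta_{\alpha_i})(z)\,\delta_z$. Because every test function $\varphi \in \mathcal{C}_c(\Z)$ is finitely supported, the conclusion $\gamma_{\mu_{\sx_i}}\to\gamma$ will follow from
\[
\lim_{i\to\infty}\Xi_{f_i}(T_{\alpha_i},\eta_{\alpha_i})(z) = \Xi_f(T_\alpha,\Lambda)(z), \qquad z\in\Z.
\]

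Second, for each fixed $z \in \Z$, I would insert the auxiliary quantity $\Xi_f(T_{\alpha_i},\eta_{\alpha_i})(z)$ and split by the triangle inequality. The piece $|\Xi_f(T_{\alpha_i},\eta_{\alpha_i})(z) - \Xi_f(T_\alpha,\Lambda)(z)|$ tends to zero uniformly in $z$ by \Cref{lem:xi_varies}. The remaining piece $|\Xi_{f_i}(T_{\alpha_i},\eta_{\alpha_i})(z) - \Xi_f(T_{\alpha_i},\eta_{\alpha_i})(z)|$ is the integral of $f_i\circ T_{\alpha_i}^{-z}\cdot f_i - f\circ T_{\alpha_i}^{-z}\cdot f$ against the probability measure $\eta_{\alpha_i}$. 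I would expand this difference as
\[
(f_i-f)\circ T_{\alpha_i}^{-z}\cdot f_i + f\circ T_{\alpha_i}^{-z}\cdot (f_i-f),
\]
and use the uniform convergence $\|f_i-f\|_\infty \to 0$ together with the uniform boundedness of $(\|f_i\|_\infty)_{i\in\N}$ (a consequence of $f$ being bounded and $f_i \to f$ uniformly) to bound the piece by $\|f_i-f\|_\infty(\|f_i\|_\infty + \|f\|_\infty)$, which tends to zero independently of $z$.

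Finally, the formula for $\widehat{\gamma}$ follows by inspecting the proof of \Cref{thm:diffraction_rotation}\eqref{lem:4_1(b)}: the passage from $\sum_{z\in\Z}\Xi_f(T_\alpha,\Lambda)(z)\,\delta_z$ to $\sum_{m\in\Z}\widehat{\Xi}(T_\alpha,\Lambda)(m)\,\delta_{(\iota\alpha)^m}$ uses only the convolution representation $\Xi_f(T_\alpha,\Lambda)(k) = f_\iota\ast\widetilde{f_\iota}((\iota\alpha)^k)$ from \eqref{weightsAreConvolution} together with Fourier inversion, and so it goes through for arbitrary $\alpha \in \R^+$. I expect the principal obstacle to be the transfer from $\eta_{\alpha_i}$-averages (supported on finite orbits when $\alpha_i \in \mathbb{Q}$) to $\Lambda$-averages, but this has already been packaged into \Cref{lem:xi_varies} via the Riemann integrability of $f$; absorbing the additional perturbation $f_i \to f$ is then routine, as sketched above.
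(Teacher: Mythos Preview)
Your proposal is correct and follows essentially the same route as the paper: both arguments split, via the triangle inequality, into a piece handling the variation in the dynamics (delegated to \Cref{lem:xi_varies}, respectively \Cref{lem:eta_rotation_convergence}) and a piece handling the variation in the observable, bounded by $\lVert f_i - f\rVert_\infty(\lVert f_i\rVert_\infty + \lVert f\rVert_\infty)$ using uniform convergence. The only cosmetic difference is that the paper writes the decomposition in convolution notation $(f_{\alpha_i,\sx_i}-(f_i)_{\alpha_i,\sx_i})\ast\widetilde{f_{\alpha_i,\sx_i}} + (f_i)_{\alpha_i,\sx_i}\ast(\widetilde{f_{\alpha_i,\sx_i}}-\widetilde{(f_i)_{\alpha_i,\sx_i}})$ whereas you unpack it directly as an integral; the estimates are identical.
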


This result also holds for complex-valued functions by using the definition of Riemann integration of complex-valued functions as given in \cite{AmannEscher1999_Analysis}.  Further, \Cref{fig2} illustrates that the diffractions of two $f$-weighted return time measures associated to rigid rotations with irrational rotation numbers close together, are not too dissimilar in the vague topology.

\begin{figure}
  \centering
    \includegraphics[width=0.625\textwidth]{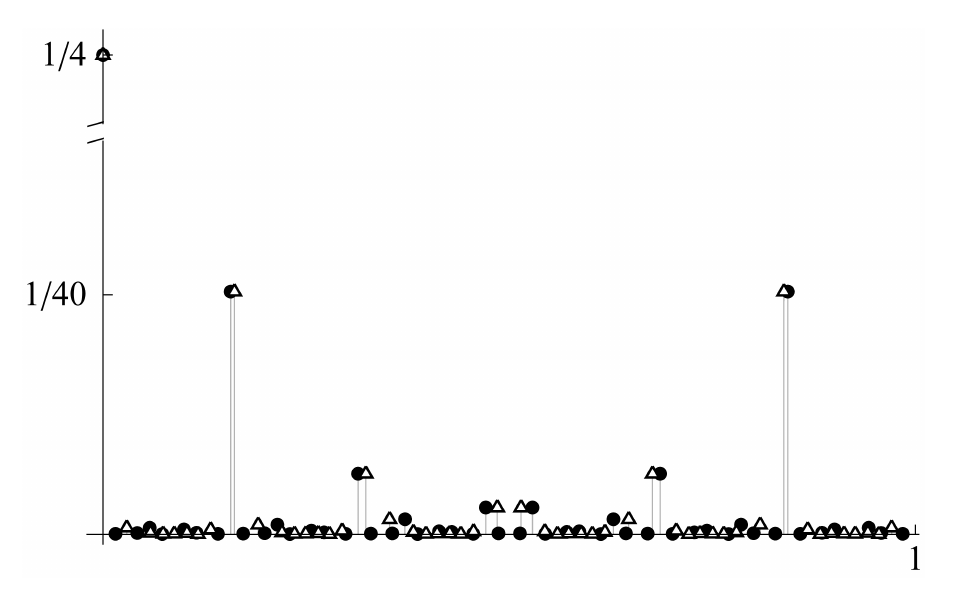}
      \caption{The $50$ largest atoms of the pure-point diffraction measure $\widehat{\gamma}$ for the  $f$-weighted return time measures with $f \colon [0,1]\to \R$ give by $f(x) = x$, and rotation numbers $\alpha=\pi /20$ (dots) and $\alpha =103 \pi/2000$ (triangles).}\label{fig2}
\end{figure}

\begin{proof}[Proof of \Cref{thm:convergence}]
If all $\alpha_i \not\in \mathbb{Q}$ for all $i \in \N$, then the convergence follow from \eqref{eq:theorem:autocorrelation_2}, \eqref{eq:theorem:autocorrelation_4} and the fact that $(f_i)_{\iota}*\widetilde{(f_i)_{\iota}}$ converges to $f_\iota*\widetilde{f_\iota}$ uniformly -- this fact is a direct consequence of how the involved maps are defined.

If $\alpha_i = p_{i}/q_{i}$ with $p_{i}, q_{i} \in \N$ and $\operatorname{gcd}(p_{i},q_{i}) =1$ for all $i \in \N$, then as $f_{i}$ converges to $f$ uniformly, given $\varepsilon>0$ there exists $N\in\N$ such that $\lVert f_i-f \rVert_\infty < \varepsilon$ for all $i\geq N$, and so letting $i \geq \N$,
\begin{align*}
\left\lvert f_{\iota}*\widetilde{f}_{\iota} - (f_{i})_{\alpha_i,\sx_{i}}*\widetilde{(f_{i})_{\alpha_i,\sx_{i}}} \right\rvert 
=& \left\lvert f_{\iota}*\widetilde{f}_{\iota} - f_{\alpha_i,\sx_i}*\widetilde{f_{\alpha_i,\sx_i}} + f_{\alpha_i,\sx_i}*\widetilde{f_{\alpha_i,\sx_i}} - (f_i)_{\alpha_i,\sx_i}*\widetilde{f_{\alpha_i,\sx_i}} + (f_i)_{\alpha_i,\sx_i}*\widetilde{f_{\alpha_i,\sx_i}} - (f_{i})_{\alpha_i,\sx_{i}}*\widetilde{(f_{i})_{\alpha_i,\sx_{i}}} \right\rvert \\
\leq& \left\lvert f_{\iota}*\widetilde{f_{\iota}} - f_{\alpha_i,\sx_i}*\widetilde{f_{\alpha_i,\sx_i}}  \right\rvert + \left\lvert (f_{\alpha_i,\sx_i}-(f_i)_{\alpha_i,\sx_i})*\widetilde{f_{\alpha_i,\sx_i}} \right\rvert + \left\lvert (f_i)_{\alpha_i,\sx_i}*(\widetilde{f_{\alpha_i,\sx_i}}-\widetilde{{f_{i}}_{\alpha_i,\sx_{i}}}) \right\rvert \\
\leq& \left\lvert f_{\iota}*\widetilde{f_{\iota}} - f_{\alpha_i,\sx_i}*\widetilde{f_{\alpha_i,\sx_i}} \right\rvert + \frac{2}{q_i} \sum_{m\in\Z_{q_i}} (\lVert f \rVert_\infty + \varepsilon) \ \lVert f_i - f \rVert_\infty \\
\leq& \left\lvert f_{\iota}*\widetilde{f_{\iota}} - f_{\alpha_i,\sx_i}*\widetilde{f_{\alpha_i,\sx_i}} \right\rvert +  2 \ (\lVert f \rVert_\infty + \varepsilon) \ \varepsilon.
\end{align*}
This together with \eqref{eq:theorem:autocorrelation_1}, \eqref{eq:theorem:autocorrelation_3} and \Cref{lem:xi_varies} yields the result. 
\end{proof}

The following examples (\Cref{Example:1,Example:2}) show, for a sufficiently nice $f$ and for $\alpha=p/q$ with $p, q \in \N$ and $\operatorname{gcd}(p,q) =1$, one may have $\lim_{i \to \infty} \Xi(T_{\alpha_i},\Lambda)(z)\to \Xi(T_\alpha,\eta_{q,0})(z)$ for $z\in \N$, where $(\alpha_i)_{i\in\N}  \in [0, 1)^{\N}$ is such that $\lim_{i \to \infty} \alpha_{i} = \alpha$.

\begin{example}\label{Example:1}
Let $\alpha=p/q$ with $p, q \in \N$ and $\operatorname{gcd}(p,q) =1$.  If $f = \chf{[0,\genfrac{}{}{}{2}{r}{q})}$ for a fixed $r\in\Z_q$, then for any sequence $(\alpha_i)_{i\in\N}  \in [0, 1)^{\N}$ we have $\lim_{i \to \infty}\alpha_i = \alpha$ implies that, for all $z\in\Z$,
\begin{align*}
\lim_{i \to \infty} \Xi(T_{\alpha_i},\Lambda)(z) = \Xi(T_\alpha,\eta_{q,0})(z).
\end{align*}
This is due to the fact that, for $m\in\Z_q$,
\begin{align*}
\int q \cdot f \cdot \chf{[{\frac{m-1}{q}},\ \frac{m}{q})} \ \mathrm{d}\Lambda = f\genfrac{(}{)}{}{2}{m-1}{q}.
\end{align*}
Hence, for all $m\in\Z_q$,
\begin{align*}
\Xi(T_{\alpha},\Lambda)(m) = \Xi(T_\alpha,\eta_{q,0})(m).
\end{align*}
Since $f_{\iota}*\widetilde{f_\iota}$ is a continuous function, if $\lim_{i \to \infty}\alpha_i = \alpha$, then, for all $z\in\Z$,
\begin{align*}
\lim_{i \to \infty} \Xi(T_{\alpha_i},\Lambda)(z)
= f_\iota*\widetilde{f_\iota}((\iota \alpha)^z)
= f_\iota*\widetilde{f_\iota}(\iota \{ z \alpha \})
= f_{\alpha, 0}*\widetilde{f_{\alpha, 0}}( [z]_{q})
= \Xi(T_\alpha,\eta_{q,0})(z).
\end{align*}
\end{example}

\begin{example}\label{Example:2}
Let $\alpha = p/q < 1/2$ with $p, q \in \N$ and $\operatorname{gcd}(p,q) =1$.  If $f = \chf{[0,(2p+1)/(2q))}$, then for any sequence  of irrationals $(\alpha_i)_{i\in\N}$ in  $\R^{+}$ with $\lim_{i \to \infty} \alpha_i  = \alpha$ and $z\in \Z$, we have
\begin{align*}
\lim_{i \to \infty} \Xi(T_{\alpha_i},\Lambda)(z)
= f_{\iota}*\widetilde{f_{\iota}}((\iota \alpha)^z)
= \begin{cases}
\frac{2p+1}{q} - \{ \alpha z \} & \{ \alpha z \} \in[0,\genfrac{}{}{}{2}{2p+1}{2q}),\\
0 & \{ \alpha z \} \in[\genfrac{}{}{}{2}{2p+1}{2q},1-\genfrac{}{}{}{2}{2p+1}{2q}),\\
\frac{2p+1}{q}-\left(1 - \{ \alpha z \} \right) & \{ \alpha z \} \in[1-\genfrac{}{}{}{2}{2p+1}{2q},1).
\end{cases}
\end{align*}
On the other hand, letting $l = [z]_{q}$,
\begin{align*}
\Xi(T_\alpha,\eta_{q,0})(z)
= q^{-1} \sum_{k\in\Z_q} \chf{_{[0, (2p+1)/(2q) )}}\left( \genfrac{}{}{}{1}{k}{q} \right) \cdot \chf{_{[0, (2p+1)/(2q))}} \left( \left\{ \genfrac{}{}{}{1}{k-l}{q} \right\} \right)
\end{align*}
At $z=0$, we have $f_{\iota}*\widetilde{f_{\iota}}(0)=\frac{2p+1}{q}\neq\frac{p}{q}=f_{\alpha,0}*\widetilde{f_{\alpha,0}}(0)$.
\end{example}

\begin{remark}
Let $f \in C([0, 1))$, $\sx \in [0, 1)$, $\alpha \in \R^{+}$ and $(\alpha_i)_{i \in \N}$ denote a fixed sequence in $\R^{+}$ with $\alpha_i \neq \alpha$ for all $i \in \N$. Let $\mu_{\alpha_{i},\sx}$ denote the $f$-weighted return time measure with respect to $T_{\alpha_{i}}$ and with reference point $\sx$, and let $\mu_{\sx}$ denote the $f$-weighted return time measure with respect to $T_{\alpha}$ and with reference point $\sx$. We immediately see that $\lim_{i \to \infty} \mu_{\alpha_i,\sx} = \mu_{\alpha,\sx}$ if $\lim_{i \to \infty}\alpha_i = \alpha$. To have $\lim_{i \to \infty} \gamma_{\mu_{\alpha_i,\sx}} = \gamma_{\mu_{\alpha,\sx}}$, by \eqref{eq:theorem:autocorrelation_1}--\eqref{eq:theorem:autocorrelation_4} and \Cref{lem:xi_varies}, it is necessary to show pointwise convergence of the sequence of maps given by $\Xi(T_{\alpha_i},\eta_{q,\sx})$ if $\alpha_i$ is rational, and $\Xi(T_{\alpha_i},\Lambda)$ if $\alpha_i$ is irrational.  However, the previous examples show that this is not always the case.
\end{remark}

\section*{Acknowledgements}

Part of this work was completed while the authors were visiting the Mittag-Leffler institute as part of the research program \textsl{Fractal Geometry and Dynamics}. We are extremely grateful to the organisers and staff for their very kind hospitality, financial support and stimulating atmosphere.  The authors also wish to thank M.~Baake and N.~Strungaru for many interesting and insightful discussions.  Finally, the authors are grateful to MINTernational for providing financial support for research visits between Universit\"at Bremen and California Polytechnic State University.

\bibliographystyle{plain}
\bibliography{sources}

\end{document}